\newcommand*\patchAmsMathEnvironmentForLineno[1]{%
  \expandafter\let\csname old#1\expandafter\endcsname\csname #1\endcsname
  \expandafter\let\csname oldend#1\expandafter\endcsname\csname end#1\endcsname
  \renewenvironment{#1}%
     {\linenomath\csname old#1\endcsname}%
     {\csname oldend#1\endcsname\endlinenomath}}%
\newcommand*\patchBothAmsMathEnvironmentsForLineno[1]{%
  \patchAmsMathEnvironmentForLineno{#1}%
  \patchAmsMathEnvironmentForLineno{#1*}}%
\newtheorem{thm}{Theorem}[section]
\newtheorem{dfn}[thm]{Definition}
\newtheorem{lma}[thm]{Lemma}
\newtheorem{cor}[thm]{Corollary}
\newtheorem{prp}[thm]{Proposition}
\newtheorem{exm}[thm]{Example}
\newtheorem{conj}[thm]{Conjecture}
\numberwithin{equation}{section}
\def\eps{\varepsilon}
\DeclareMathOperator*{\Dmon}{\Delta_{{\rm mon}}}
\DeclareMathOperator*{\dist}{{\rm dist}}
\title[Properly coloured Hamiltonian cycles in $K_{n}^{c}$]{Properly coloured Hamiltonian cycles in edge-coloured complete graphs}
\author{Allan Lo}
\address{School of Mathematics, University of Birmingham,\\Birmingham, B15 2TT, UK}
\email{s.a.lo@bham.ac.uk}
\thanks{The research leading to these results was supported by the  European Research Council
under the ERC Grant Agreement no. 258345.}
\date{\today}
\keywords{proper edge-coloring, $2$-factor, Hamiltonian cycle, Hamiltonian path}
\subjclass[2000]{05C15; 05C38} 
\begin{document}



\begin{abstract}
Let $K_n^c$ be an edge-coloured complete graph on $n$ vertices. 
Let $\Dmon(K_n^c)$ denote the largest number of edges of the same colour incident with a vertex of $K_n^c$.
A properly coloured cycle is a cycle such that no two adjacent edges have the same colour.
In 1976, Bollob\'as and Erd\H{o}s~\cite{MR0411999} conjectured that every $K_n^c$ with $\Dmon(K_n^c) < \lfloor n/2 \rfloor$ contains a properly coloured Hamiltonian cycle.
In this paper, we show that for any $\eps > 0 $, there exists an integer $n_0$ such that every $K_n^c$ with $\Dmon(K_n^c) < (1/2 - \eps) n $ and $n \ge n_0$ contains a properly coloured Hamiltonian cycle.
This improves a result of Alon and Gutin~\cite{MR1610269}.
Hence, the conjecture of Bollob\'as and Erd\H{o}s is true asymptotically. 
\end{abstract}

\maketitle

\section{Introduction}
An \emph{edge-coloured graph} is a graph $G$ with an edge-colouring~$c$ of~$G$.
We say that $G$ is \emph{properly coloured} if no two adjacent edges of $G$ have the same colour.
If all edges have the same colour, then $G$ is \emph{monochromatic}.

Let $K_n^c$ be an edge-coloured complete graph on $n$ vertices.
Let $\Dmon(K_n^c)$ denote the maximum number of edges of the same colour incident with a vertex of $K_n^c$.
Equivalently, $\Dmon(K_n^c) = \max  \Delta(H)$ over all monochromatic subgraphs $H$ in $K_n^c$.
Daykin~\cite{MR0406846} asked whether there exists a constant $\mu$ such that every $K_n^c$ with $\Dmon(K_n^c) \le \mu n$ and $n \ge 3$ contains a properly coloured Hamiltonian cycle.
This question was answered independently by Bollob\'as and Erd\H{o}s~\cite{MR0411999} with $ \mu = 1/69$, and Chen and Daykin~\cite{MR0422070} with $\mu = 1/17$.
Bollob\'as and Erd\H{o}s proposed the following conjecture.
\begin{conj}[Bollob\'as and Erd\H{o}s~\cite{MR0411999}] \label{conj}
If $\Dmon(K_n^c) < \lfloor n/2 \rfloor $, then $K_n^c$ contains a properly coloured Hamiltonian cycle.
\end{conj}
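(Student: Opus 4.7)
The plan is to attack Conjecture~\ref{conj} via a stability strategy: split the problem into a \emph{non-extremal} case, where the colouring of $K_n^c$ is far in edit distance from any tight construction, and an \emph{extremal} case, where it is close to one. The non-extremal case should be amenable to an absorbing-type argument that tolerates a small slack in the hypothesis (of the flavour already needed for the asymptotic statement), while the extremal case requires a direct structural analysis that genuinely uses the sharp bound $\Dmon(K_n^c) < \lfloor n/2 \rfloor$.

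For the non-extremal case, I would first build a properly coloured $2$-factor $F$ of $K_n^c$; its existence under the hypothesis $\Dmon(K_n^c) < n/2$ can be extracted from a defect form of Hall's theorem applied to the auxiliary bipartite ``alternation'' graph that encodes proper colour incidences at each vertex. Next, I would construct an \emph{absorbing path} $P$ of length $o(n)$ such that every small set $U$ of leftover vertices can be inserted into $P$ while preserving proper colouring; the counting step here uses the colour degree hypothesis to show that each vertex has $\Omega(n^2)$ admissible short absorbers. Finally, I would merge the cycles of $F$ pairwise via colour-avoiding chord swaps: for any two cycles $C_1,C_2$ of $F$, the hypothesis $\Dmon < n/2$ produces a large bipartite graph of $C_1$--$C_2$ edges whose colour differs from that of the four surrounding cycle edges, giving a properly coloured merge. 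The absorbing path then swallows whatever vertices remain uncovered.

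The extremal case is where the exact threshold $\lfloor n/2\rfloor$ must be used. I would first classify colourings that nearly saturate $\Dmon(K_n^c) = \lfloor n/2\rfloor-1$. The canonical tight example is a bipartition $V = A\cup B$ with $|A|=|B|=n/2$, a single colour $\alpha$ nearly filling the edges inside $A$, and a single colour $\beta$ nearly filling those inside $B$; any near-extremal colouring should be a bounded perturbation of such a template. For these one constructs a properly coloured Hamilton cycle explicitly by zig-zagging along a carefully chosen perfect matching between $A$ and $B$, patched with short detours that absorb the perturbation. Here the strict inequality $\Dmon(K_n^c) < n/2$ is essential: it guarantees that at every vertex each monochromatic star misses at least one edge, which is exactly what is needed to reroute whenever the zig-zag would otherwise repeat a colour.

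The hard part, and the reason Conjecture~\ref{conj} is not implied by the asymptotic result, will be the transition regime where $\Dmon(K_n^c)$ lies just below $\lfloor n/2\rfloor$ but the colouring is \emph{not} a small perturbation of the two-clique template. In this regime the absorbing method is just barely failing while no clean algebraic structure forces a Hamilton cycle. I expect most of the effort to consist in ruling out intermediate configurations, probably by iterating the stability dichotomy: at each step either exhibit a monochromatic star of size $\lfloor n/2\rfloor$ (contradicting the hypothesis) or expose extra local structure that restores the slack needed to run the absorbing proof with $\eps = 0$. Closing this gap is precisely what separates Conjecture~\ref{conj} from the asymptotic theorem, and I would not expect the argument to be short.
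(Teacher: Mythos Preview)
The statement you are attempting to prove is Conjecture~\ref{conj}, which the paper does \emph{not} prove. The paper establishes only the asymptotic version, Theorem~\ref{PCHC}, and explicitly leaves the full conjecture open. There is therefore no proof in the paper to compare your proposal against; what you have written is a research plan for an open problem, and you yourself acknowledge as much in your final paragraph.

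That said, your plan has a concrete defect in the extremal analysis. Your ``canonical tight example'' --- a bipartition $V=A\cup B$ with one colour nearly filling $A$ and another nearly filling $B$ --- is not among the known sharpness constructions, and it is not clear it even fails to contain a properly coloured Hamilton cycle. The paper's Section~\ref{sec:example} gives the actual extremal families: for $n\equiv 1\pmod 4$ a two-colouring of $K_n$ by a $2k$-regular graph and its complement (which fails because $\chi'(C_n)=3$ for odd $n$); for even $n$ a colouring derived from a tournament via Proposition~\ref{construction}; and the constructions of Proposition~\ref{prp:excol}. These are structurally quite different from one another and from your bipartite template, so any stability approach would have to handle several distinct near-extremal configurations, not just one. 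Your proposed zig-zag along a bipartition does not address the two-colour regular-graph example at all, since there the colouring has no bipartite structure to exploit.

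For the non-extremal part, your outline (build a properly coloured $2$-factor, set up absorbers, merge cycles) is broadly in the spirit of the paper's proof of Theorem~\ref{PCHC}, but the paper's argument genuinely requires the $\eps n$ slack at several points --- in the counting for Lemma~\ref{lma:abspath}, in the growth estimates of Lemma~\ref{lma:ifar} and Lemma~\ref{lma:rotation3}, and in the error budget of Lemma~\ref{lma:absorbing}. Removing that slack is exactly the difficulty of the conjecture, and nothing in your proposal indicates how to do so.
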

Later, Shearer~\cite{MR523092} showed that $\Dmon(K_n^c) \le n /7$ is sufficient.
The best known bound on $\Dmon(K_n^c)$ was given by Alon and Gutin~\cite{MR1610269} where $\Dmon(K_n^c) < (1-1/\sqrt2 - o(1))n$.
On the other hand, Li, Wang and Zhou~\cite{LiWangZhou} showed that if $\Dmon(K_n^c) < \lfloor n/2 \rfloor$, then $K_n^c$ contains a properly coloured cycle of length at least $(n+2)/3+1$.

For the existence of a properly coloured Hamiltonian path, Barr~\cite{MR1670602} proved that $K_n^c$ containing no monochromatic triangle is a sufficient condition.
Note that there is no assumption on $\Dmon ( K_n^c )$.
A \emph{$2$-factor} is a spanning 2-regular graph. 
Bang-Jensen, Gutin and Yeo~\cite{MR1609957} showed that $K_n^c$ containing a properly coloured $2$-factor is also a sufficient condition.
\begin{thm}[Bang-Jensen, Gutin and Yeo~\cite{MR1609957}] \label{Bang}
If $K_n^c$ contains a properly coloured $2$-factor, then $K_n^c$ contains a properly coloured Hamiltonian path.
\end{thm}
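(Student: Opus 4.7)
The plan is to induct on the number of cycles in a properly coloured spanning union of vertex-disjoint paths and cycles of $K_n^c$, starting from the given PC $2$-factor $F=C_1\cup\cdots\cup C_k$. If $k=1$ then $F$ is already a PC Hamiltonian cycle and deleting any edge of $F$ gives a PC Hamiltonian path; otherwise I would iteratively replace the current spanning subgraph by one with strictly fewer cycles, until only paths remain, and then merge the paths into one.

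The first step is a \emph{cycle--cycle merging lemma}: any two disjoint PC cycles $C,C'$ of $K_n^c$ can be fused into a single PC path on $V(C)\cup V(C')$. Pick any $u\in V(C)$ and $v\in V(C')$ and let $\mu=c(uv)$; by properness of $C$, the two $C$-edges at $u$ have distinct colours, so we may delete the one with colour $\mu$ (or an arbitrary one, if neither has colour $\mu$) so that the edge retained at $u$ has colour $\neq\mu$, and likewise at $v$ in $C'$. Adding $uv$ then yields a PC path, reducing the cycle count by $2$. An analogous \emph{path--cycle merging lemma} absorbs a disjoint PC cycle $C$ into a PC path $P$ with endpoint $a$ provided some $v\in V(C)$ satisfies $c(av)\neq\lambda$, where $\lambda$ is the colour of the $P$-edge incident to $a$. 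Repeated application of these two lemmas reduces the spanning subgraph to a single PC path (or Hamiltonian cycle, from which a path follows by removing any edge).

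The hard part will be the \emph{obstructed} case of the path--cycle step: for every endpoint $a$ of the current PC path $P$ and every $v\in V(C)$, one has $c(av)=\lambda_a$, the colour at $a$ in $P$, so no single-edge absorption works. My plan is to resolve this via a two-edge swap. Choose an interior vertex $p_i$ of $P$ and a vertex $v\in V(C)$, delete the edges $p_ip_{i+1}$ and $vv^+$, and add $p_iv$ and $p_{i+1}v^+$; the resulting spanning subgraph is a single Hamiltonian path, and properness reduces to four local colour inequalities at $p_i,v,v^+,p_{i+1}$. There are $\Theta(|V(P)|\cdot|V(C)|)$ candidate pairs $(i,v)$ against only $O(1)$ local constraints per incidence, and one may further vary the orientation of $C$ (i.e.\ whether to delete $vv^+$ or $vv^-$) and the choice of $P$-endpoint ($p_1$ versus $p_m$). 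The technical crux is to show that at least one such choice satisfies all four inequalities --- equivalently, to rule out an adversarial colouring between $V(P)$ and $V(C)$ killing every candidate swap; this should use the PC property of both $P$ and $C$ together with the freedom to recover different intermediate paths $P$ from different initial edges removed when the cycles $C_1,\dots,C_k$ were merged.
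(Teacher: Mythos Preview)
The paper does not prove this theorem: it is quoted from~\cite{MR1609957} and invoked as a black box in Section~\ref{sec:proof} (and, via Theorem~\ref{Feng}, in the proof of Lemma~\ref{lma:2factor}). There is therefore no proof in the paper to compare your proposal against.

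Assessing your argument on its own: the cycle--cycle merge and the unobstructed path--cycle merge are correct. The gap you yourself flag in the obstructed case is real, and the heuristic you offer does not close it. Saying ``$\Theta(|V(P)|\cdot|V(C)|)$ candidate pairs against only $O(1)$ local constraints per incidence'' gives no lower bound on the number of \emph{good} pairs: a single constraint such as $c(p_{i-1}p_i)\ne c(p_iv)$ can be made to fail for \emph{every} $(i,v)$ with $i\ge2$ simultaneously (set $c(p_iv)=c(p_{i-1}p_i)$ for all such $i$ and all $v\in V(C)$, which contradicts nothing assumed so far), so ``few constraints per pair'' is beside the point. Your closing sentence, that the obstruction ``should'' be resolvable via the PC structure together with the freedom in earlier merges, is a plan rather than a proof; and once you invoke that freedom you are no longer performing a local two-edge swap but a global backtracking argument whose structure you have not specified. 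For orientation, the published proofs of this result and of its strengthening Theorem~\ref{Feng} do not proceed via a single local swap; they use an extremal argument on properly coloured $1$-path-cycles with path rotations, of the kind formalised in Lemma~\ref{rotation} of the present paper.
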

This result was later improved by Feng, Giesen, Guo, Gutin, Jensen and Rafiey~\cite{MR2270727}.
A graph $G$ is said to be a \emph{$1$-path-cycle} if $G$ is a vertex-disjoint union of at most one path $P$ and a number of cycles.
Note that a spanning $1$-path-cycle without any cycles is a Hamiltonian path, and a spanning $1$-path-cycle without a path is a $2$-factor.

\begin{thm}[Feng, Giesen, Guo, Gutin, Jensen and Rafiey~\cite{MR2270727}] \label{Feng}
Let $K_n^c$ be an edge-coloured $K_n$.
Then $K_n^c$ contains a properly coloured Hamiltonian path
if and only if $K_n^c$ contains a spanning properly coloured $1$-path-cycle.
\end{thm}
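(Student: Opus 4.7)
The ``only if'' direction is immediate, since a properly coloured Hamiltonian path is itself a spanning properly coloured $1$-path-cycle (one with no cycle components). For the converse, my plan is to choose, among all spanning properly coloured $1$-path-cycles of $K_n^c$, one $F$ with the minimum number of cycle components $k$, and argue that $k = 0$ must hold.

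If $k \ge 1$ and $F$ has no path component, then $F$ is already a spanning properly coloured $2$-factor, and Theorem~\ref{Bang} produces the desired Hamiltonian path. So I may assume that $F$ has a path component $P = v_1 v_2 \cdots v_s$ together with a cycle component $C = u_1 u_2 \cdots u_\ell u_1$. The strategy is to absorb $C$ into $P$, producing a spanning properly coloured $1$-path-cycle with only $k-1$ cycles and contradicting minimality.

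The merging argument rests on the following colour observation: for each $u_i \in V(C)$, since $C$ is properly coloured, the two cycle-edges $u_{i-1}u_i$ and $u_i u_{i+1}$ have distinct colours. Thus, attaching $u_i$ to the endpoint $v_1$ via the edge $v_1 u_i$ and removing whichever of these two cycle-edges shares its colour (and an arbitrary one if neither does) produces a properly coloured path on $V(P) \cup V(C)$, provided the single condition $c(v_1 u_i) \neq c(v_1 v_2)$ is met; the symmetric statement holds at $v_s$. When $s = 1$ this condition is vacuous, so the merge always succeeds. Hence the only obstruction to reducing $k$ is the situation in which every edge from $v_1$ to $V(C)$ has colour $c(v_1 v_2)$ and simultaneously every edge from $v_s$ to $V(C)$ has colour $c(v_{s-1} v_s)$.

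The main obstacle is this ``monochromatic star'' configuration at both endpoints of $P$. My plan is to break out of it by one of two operations: either (i) rotate $P$, that is, locate some index $j$ with $c(v_1 v_j) \neq c(v_1 v_2)$ and $c(v_1 v_j) \neq c(v_{j-1} v_j)$, swap in the edge $v_1 v_j$ while deleting $v_{j-1} v_j$ to obtain a properly coloured path on $V(P)$ with a new endpoint, then retry the merge; or (ii) insert $C$ into the interior of $P$ by replacing an internal edge $v_j v_{j+1}$ with a detour $v_j u_i u_{i+1} \cdots u_{i-1} v_{j+1}$ around $C$, which imposes four colour conditions but comes with $\ell \cdot (s-1)$ choices of $(i,j)$ and a choice of traversal direction. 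The strong monochromaticity that is forced in the bad case should be leveraged via a double-counting argument on the colours of edges between $V(P)$ and $V(C)$ to guarantee that at least one of (i) or (ii) succeeds. Executing this colour bookkeeping cleanly in every remaining configuration is the main technical difficulty of the argument.
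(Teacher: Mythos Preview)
The paper does not prove this theorem: it is quoted from Feng, Giesen, Guo, Gutin, Jensen and Rafiey and used as a black box (for instance in the proof of Lemma~\ref{lma:2factor}). So there is no proof here to compare your attempt against.

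Judged on its own, your proposal has a genuine gap. The ``only if'' direction, the appeal to Theorem~\ref{Bang} when the minimal $1$-path-cycle happens to be a $2$-factor, and your observation that a merge at an endpoint succeeds whenever a single edge $v_1u_i$ (respectively $v_su_i$) avoids the endpoint colour are all fine. The difficulty is precisely the ``monochromatic star'' case you isolate, and neither of your two escape routes is actually shown to work. For rotation~(i): Theorem~\ref{Feng} carries no bound on $\Dmon$, so nothing forbids \emph{every} edge at $v_1$ (into $V(P)$ as well as into $V(C)$) from having colour $c(v_1v_2)$; in that situation no rotation at $v_1$ exists at all, and even when one does exist you provide no invariant that improves, so there is no termination argument. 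For insertion~(ii): you only assert that a double-counting on the colours between $V(P)$ and $V(C)$ ``should'' force a valid pair $(i,j)$, but you do not carry it out, and it is not evident that the monochromatic-star hypothesis gives enough leverage to meet all four colour constraints simultaneously. As written this is a plan rather than a proof; the ``main technical difficulty'' you flag is essentially the whole content of the theorem beyond Theorem~\ref{Bang}.
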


For a survey regarding properly coloured subgraphs in edge-coloured graphs, we recommend Chapter~16 of~\cite{MR2472389}.
In this paper, we prove that Conjecture~\ref{conj} is true asymptotically. 

\begin{thm} \label{PCHC}
For any $\eps> 0$, there exists an integer $N_0 = N_0(\eps)$ such that every $K_n^c$ with $n \ge N_0$ and $\Dmon(K_n^c) \le (1/2 - \eps)n $ contains a properly coloured Hamiltonian cycle.
\end{thm}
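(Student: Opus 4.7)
The plan is to adapt the absorbing-plus-reservoir method of R\"odl, Ruci\'nski and Szemer\'edi to the properly coloured setting. The hypothesis is used throughout in the form: for any vertex $v$ and any single colour $c$, at least $(1/2+\eps)n-1$ edges at $v$ have colour different from $c$, so any two such sets at distinct vertices intersect in $\Omega(\eps n)$ vertices.

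The workhorse is a \emph{connection lemma}: for any two vertices $u,v$, any pair of colours $c_u,c_v$ to avoid at the two ends, and any set $W$ of $o(n)$ forbidden internal vertices, there exist at least $\Omega(n^{2})$ properly coloured $u$--$v$ paths of length three or four through $V\setminus W$ whose first and last edges avoid $c_u$ and $c_v$. This follows from the density consequence above by an elementary neighbourhood-intersection argument. Using the connection lemma I would define a \emph{$v$-absorber} to be a short properly coloured path with fixed endpoints $a,b$ such that $v$ can be spliced in to produce a properly coloured $a$--$b$ path through one extra vertex (for example a five-vertex gadget). The density condition yields $\Omega(n^{k})$ absorbers per vertex for a suitable constant $k$, and a random selection produces a pairwise-disjoint collection $\mathcal{A}$ of absorbers of total size $o(n)$ in which every $v\in V$ is absorbable by $\Omega(\eps n)$ members of $\mathcal{A}$.

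Chaining the absorbers of $\mathcal{A}$ together by repeated applications of the connection lemma yields a single \emph{absorbing path} $P^{*}$ of length $o(n)$ such that any $U\subseteq V\setminus V(P^{*})$ with $|U|\le \eps n/10$ can be absorbed into $P^{*}$ to give a properly coloured path on $V(P^{*})\cup U$ with the same endpoints. I would also set aside a small reservoir $R$. On $V\setminus(V(P^{*})\cup R)$ the monochromatic degree condition is still $<(1/2-\eps/2)n'$, so a properly coloured spanning $1$-path-cycle can be produced there by a standard bipartite-matching argument; invoking Theorem~\ref{Feng} then upgrades this to a properly coloured Hamiltonian path of $V\setminus(V(P^{*})\cup R)$. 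Joining this path to $P^{*}$ through $R$ via the connection lemma, absorbing any leftover of $R$ into $P^{*}$, and closing the two free endpoints through one last application of the connection lemma via $R$, produces a properly coloured Hamiltonian cycle.

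The main obstacle I expect is the \emph{colour bookkeeping at the endpoints}, especially at the closing step. To close a properly coloured Hamiltonian path into a cycle one needs the closing edge to have colour different from both adjacent edges, yet the two end-colours are fixed by the prior construction. My intended fix is to build a small ``flexible tail'' of short reservoir-based connectors into $P^{*}$: a bank of alternatives that can re-route either endpoint-colour to one of $\Omega(\eps n)$ options. One then first selects a closing edge $e$ and afterwards tunes the two end-colours to be compatible with $c(e)$. Propagating the list of forbidden colours through the connection-and-absorption steps, and showing that the loss of options at every stage is $o(\eps n)$, is the delicate part of the argument; once this bookkeeping is in place, the rest is a routine implementation of the absorbing method.
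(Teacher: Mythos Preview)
Your high-level plan---absorption plus a long path obtained from a spanning $1$-path-cycle via Theorem~\ref{Feng}---is the same skeleton as the paper's, but two of your steps differ from the paper in ways that matter.

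\textbf{The closing problem.} You correctly identify colour bookkeeping at the closing step as the main obstacle, and you propose to handle it with a ``flexible tail'' whose details are left open. The paper sidesteps this entirely by a change in the \emph{type} of absorber. Instead of vertex-absorbers and an absorbing path, the paper builds an absorbing \emph{cycle} $C$ whose gadgets are indexed by ordered $4$-tuples $(x_1,x_2;y_1,y_2)$: an absorbing path $z_1z_2z_3z_4$ for such a tuple has the property that $z_1z_2x_1x_2$ and $y_1y_2z_3z_4$ are both properly coloured. If $P=x_1x_2\cdots x_{\ell-1}x_\ell$ is any properly coloured path disjoint from $C$, one picks a gadget for $(x_1,x_2;x_{\ell-1},x_\ell)$ sitting on $C$ and replaces its middle edge $z_2z_3$ by the detour $z_2x_1\cdots x_\ell z_3$; the result is already a properly coloured \emph{cycle} on $V(C)\cup V(P)$. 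There is no separate closing step and no reservoir. This is the main new idea, and it is what your proposal is missing.

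\textbf{The spanning $1$-path-cycle.} You assert that on the leftover vertex set a properly coloured spanning $1$-path-cycle ``can be produced by a standard bipartite-matching argument''. This is not standard: the usual split-vertex matching reduction produces a $2$-factor in the uncoloured sense but does not enforce the two edges at each vertex to have different colours. The paper devotes all of Section~4 to proving that $\Dmon(K_n^c)\le(1/2-\eps)n$ guarantees a properly coloured $2$-factor (Lemma~\ref{lma:2factor}), via an iterated rotation argument on $1$-path-cycles that is genuinely new. Without this (or an equivalent), your pipeline stalls before you ever reach Theorem~\ref{Feng}.

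A smaller point: your connection lemma asserts $\Omega(n^2)$ properly coloured $u$--$v$ paths of length three or four. A naive union bound loses a factor of roughly $4\cdot(1/2-\eps)n^2>n^2$, so length $3$--$4$ is not obviously enough; the paper's analogue (Lemma~\ref{lma:ifar}) allows length up to $2\eps^{-2}$ and uses an iterative ``$i$-far'' argument to obtain it. This is fixable in your framework, but not by the one-line neighbourhood-intersection argument you sketch.
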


For an edge-coloured graph $G$ (not necessarily complete), the \textit{colour degree}~$d^c(v)$ of a vertex~$v$ is the number of different colours of edges incident to~$v$.
The \textit{minimum colour degree} $\delta^c(G)$ of an edge-coloured graph~$G$ is the minimum $d^c(v)$ over all vertices~$v$ in~$G$.
Li and Wang~\cite{MR2519172} proved that every edge-coloured graph $G$ contains a properly coloured path of length $2\delta^c(G)$ or a properly coloured cycle of length at least $2\delta^c(G)/3$.
In~\cite{Lo10}, the author improved their result by showing that $G$ contains a properly coloured path of length $2\delta^c(G)$ or a properly coloured cycle of length at least $\delta^c(G)+1$.
Furthermore, in \cite{LoDirac}, the author proved that every edge-coloured graph $G$ on $n$ vertices with $\delta^c(G) \ge (2/3 + \eps) n$ contains a properly coloured cycle of length $\ell$ for all $3 \le \ell \le n$ provided $\eps >0$ and $n$ is large enough.
Moreover, the bound on $\delta^c(G)$ is asymptotically best possible; that is, there exist edge-coloured graphs $G$ on $n$ vertices with $\delta^c(G) =  \lceil 2n/3 \rceil -1$, which does not contain a properly coloured Hamiltonian cycle.
Note that $\delta^c(K_n^c) + \Dmon(K_n^c) \le n$. 
Hence, Theorem~\ref{PCHC} implies the following corollary.

\begin{cor}
For any $\eps> 0$, there exists an integer $N_0 = N_0(\eps)$ such that every $K_n^c$ with $n \ge N_0$ and $\delta^c(K_n^c) \ge (1/2 + \eps)n $ contains a properly coloured Hamiltonian cycle.
\end{cor}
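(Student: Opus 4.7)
The plan is to deduce this corollary directly from Theorem~\ref{PCHC} by translating the colour-degree hypothesis $\delta^c(K_n^c) \ge (1/2 + \eps)n$ into the monochromatic-degree hypothesis of Theorem~\ref{PCHC}. The bridge is the inequality $\delta^c(K_n^c) + \Dmon(K_n^c) \le n$, noted in the text just before the corollary; I would verify this inequality first and then invoke Theorem~\ref{PCHC} as a black box.

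To establish the inequality, fix a vertex $u$ of $K_n^c$ at which the maximum monochromatic degree is attained, so that some colour, say $c_0$, appears on exactly $\Dmon(K_n^c)$ edges incident with $u$. Each of the remaining $n - 1 - \Dmon(K_n^c)$ edges at $u$ carries some colour, contributing at most $n - 1 - \Dmon(K_n^c)$ distinct further colours beyond $c_0$. Hence
\[
d^c(u) \le 1 + \bigl(n - 1 - \Dmon(K_n^c)\bigr) = n - \Dmon(K_n^c),
\]
and the trivial bound $\delta^c(K_n^c) \le d^c(u)$ yields $\delta^c(K_n^c) + \Dmon(K_n^c) \le n$.

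Now let $\eps > 0$ be given and let $N_0 = N_0(\eps)$ be the constant produced by Theorem~\ref{PCHC} for this $\eps$. For any $n \ge N_0$ and any edge-coloured complete graph $K_n^c$ satisfying the hypothesis $\delta^c(K_n^c) \ge (1/2 + \eps)n$, the inequality just established gives
\[
\Dmon(K_n^c) \le n - \delta^c(K_n^c) \le n - (1/2 + \eps)n = (1/2 - \eps)n.
\]
Theorem~\ref{PCHC} then supplies a properly coloured Hamiltonian cycle in $K_n^c$, which is exactly the conclusion of the corollary.

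There is no genuine obstacle in this argument: the corollary is purely an unpacking of Theorem~\ref{PCHC} via a one-line vertex-counting inequality, and the constant $N_0(\eps)$ can be taken to be the same one furnished by Theorem~\ref{PCHC}.
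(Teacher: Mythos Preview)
Your proposal is correct and follows exactly the approach indicated in the paper: the paper simply notes the inequality $\delta^c(K_n^c) + \Dmon(K_n^c) \le n$ and observes that the corollary is then immediate from Theorem~\ref{PCHC}, which is precisely what you have spelled out in detail.
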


Now we outline the proof of Theorem~\ref{PCHC}, which involves two main steps.
In the first step, we find (by Lemma~\ref{lma:abscycle}) a small `absorbing cycle' $C$ such that for any properly coloured path $P$ with $V(C) \cap V(P) = \emptyset$ and $|P| \ge 4$, there exists a properly coloured cycle $C'$ with $V(C') = V(P) \cup V(C)$.
This step can be viewed as a properly edge-coloured version of the absorption technique introduced by R\"odl, Ruci\'{n}ski and Szemer\'{e}di~\cite{MR2399020}.
Since the original absorption technique did not consider edge-coloured graphs, several new ideas are needed for this generalisation. 
We believe that there is further potential for this adaptation of the absorption technique.
For instance, a similar argument was also used in~\cite{LoDirac}.
In the second step, we remove the vertices of the small absorbing cycle $C$ from $K_n^c$ and let $K_{n'}^c$ be the resulting graph.
Since $C$ is small, we may assume that $\Dmon(K_{n'}^c) \le (1- \eps') n'$ for some small $\eps' >0$.
Next, we find a properly coloured $2$-factor in $K_{n'}^c$ using Lemma~\ref{lma:2factor}.
Hence, Theorem~\ref{Bang} implies that there exists a properly coloured Hamiltonian path $P$ in $K_{n'}^c$.
Finally, by the `absorbing' property of $C$, $G$ contains a properly coloured cycle $C'$ with $V(C') = V(P) \cup V(C) = V(K_n^c)$.
Therefore, $C'$ is a properly coloured Hamiltonian cycle as required.

The paper is organised as follows.
In the next section, we set up some basic notation and give some extremal examples to show that Conjecture~\ref{conj} is sharp.
Section~\ref{sec:abscycle} and Section~\ref{sec:2fact} are devoted to finding a small absorbing cycle and a properly coloured $2$-factor respectively.
Finally, we prove Theorem~\ref{PCHC} in Section~\ref{sec:proof}.


\section{Notation and extremal examples} \label{notation}

Throughout this paper, unless stated otherwise, $c$ is assumed to be an edge-colouring. 
Hence, $c(xy)$ is the colour of the edge~$xy$.
For $v \in V(G)$, we denote by $N_G(v)$ the neighbourhood of~$v$ in $G$.
If the graph $G$ is clear from the context, we omit the subscript.

Given a vertex set $U\subseteq V(G)$, write $G[U]$ for the (edge-coloured) subgraph of $G$ induced by~$U$.
We write $G \setminus U$ for the graph obtained from $G$ by deleting all vertices in~$U$.
For a vertex $u$, we sometime write $u$ to mean the set $\{u\}$.
Given a subgraph $H$ in $G$, we write $G -H$ for the graph obtained from $G$ by deleting all edges in~$H$.
For edge-disjoint graphs $G$ and $H'$, we denote by $G + H'$ the union of $G$ and $H'$.
We write $G - H +H'$ to mean $(G - H ) + H'$.

Let $U,W \subseteq V(G)$ not necessarily disjoint.
Whenever we define an auxiliary bipartite graph $H$ with vertex classes $U$ and~$W$, we mean that $H$ has vertex classes $U'$ and $W'$, where $U'$ is a copy of~$U$ and $W'$ is a copy of~$W$.
Hence, $U$ and $W$ are considered to be disjoint in~$H$.
Given an edge $uw$ in~$H$, we say that $u \in U$ and $w \in W$ to mean that $u \in U'$ and $w \in W'$.

Every path $P$ is assumed to be directed.
Hence, the paths $v_1 v_2 \dots v_{\ell}$ and $v_{\ell} v_{\ell-1} \dots v_1$ are considered different for $\ell \ge 2$.
Note that $|P|$ denotes the order of $P$. 
Given a path $P= v_1 v_2 \dots v_{\ell}$ and a vertex $x \in N(v_1) \setminus V(P)$, we define $xP$ to be the path $x  v_1 v_2 \dots v_{\ell}$. 
Similarly, given vertex-disjoint paths $P_1, \dots, P_s$, we define the path $P_1 \dots P_s$ to be the concatenation of $P_1, \dots, P_s$ (if it exists).


\subsection{Extremal examples} \label{sec:example}

We now present some edge-colourings on $K_n$ to show that Conjecture~\ref{conj} is sharp.
The first example was given by Bollob\'as and Erd\H{o}s~\cite{MR0411999} for $n \equiv  1 \pmod{4}$.

\begin{exm}
Consider $n = 4k+1$.
Let $G$ be a $2k$-regular graph on $n$ vertices. 
Note that the compliment $\overline{G}$ of $G$ is also a $2k$-regular graph.
Let $K_n^c$ be obtained by colouring all edges of $G$ red and all edges of $\overline{G}$ blue.
Note that $\Dmon(K_n^c) = 2k = \lfloor n/2 \rfloor$.
However, $K_n^c$ does not contain any properly coloured Hamiltonian cycle $C_n$, since the edge-chromatic number of $C_n$ is~$3$.
\end{exm}

For even~$n$, Fujita and Magnant~\cite{FujitaMagnant09} showed that there exists a $K_{n}^c$ with $\delta^c(K_{n}^c) = n/2$ with no properly coloured Hamiltonian cycle.
In fact, their example also satisfies $\Dmon(K_{n}^c) = n/2$.
Hence, Conjecture~\ref{conj} is also sharp for even~$n$.
The example given by Fujita and Magnant is derived from a tournament on $n$ vertices, that is, an oriented complete graph.
In the proposition below, we present a simple generalization of their construction for general oriented graphs.
Given an oriented graph $\overrightarrow{G}$, let $d^-_{\overrightarrow{G}}(v)$ and $d^+_{\overrightarrow{G}}(v)$ be the in- and outdegree of a vertex $v \in V(\overrightarrow{G})$.
Also, define the \emph{maximum indegree $\Delta^- (\overrightarrow{G})$ of $\overrightarrow{G}$} to be the maximum $d^-_{\overrightarrow{G}} (v)$ over all vertices $v \in V(\overrightarrow{G})$.

\begin{prp} \label{construction}
Let $G$ be a graph.
Suppose that $\overrightarrow{G}$ is an oriented graph obtained by orienting each edge of~$G$.
Then there exists an edge-coloured graph $G^c$ obtained by colouring each edge of $G$ such that 
\begin{itemize}
	\item[\rm (i)] $\Dmon(G^c) = \Delta^-(\overrightarrow{G})$;
	\item[\rm (ii)] $d^c_{G^c}(v) = d^+_{\overrightarrow{G}} (v) + \min \left\{ 1, d^-_{\overrightarrow{G}} (v) \right\}$ for all $v \in V(G)$;
	\item[\rm (iii)] $C$ is a properly coloured cycle in~$G^c$ if and only if $C$ is a directed cycle in~$\overrightarrow{G}$.
\end{itemize}
\end{prp}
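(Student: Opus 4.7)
The plan is to use a simple ``head-colouring'' of $G$: assign a distinct colour $c_v$ to each vertex $v \in V(G)$, and colour every edge $uv$ of $G$ with $c_v$ whenever $\overrightarrow{G}$ orients it as $u \to v$. Let $G^c$ denote the resulting edge-coloured graph. The three required properties will then follow from the fact that the set of edges receiving colour $c_v$ is exactly the set of edges oriented into $v$.

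For (i), I would observe that the monochromatic subgraph of $G^c$ in colour $c_v$ is the star formed by the edges oriented into $v$: its maximum degree equals $d^-_{\overrightarrow{G}}(v)$, attained at $v$, while every other vertex has degree at most $1$ in this star. Taking the maximum over all colours gives $\Dmon(G^c) = \Delta^-(\overrightarrow{G})$. For (ii), the colours incident to a vertex $v$ are $c_v$ itself (present exactly when $d^-_{\overrightarrow{G}}(v) \ge 1$) together with $c_w$ for each out-neighbour $w$ of $v$, and these are all distinct, so $d^c_{G^c}(v) = d^+_{\overrightarrow{G}}(v) + \min\{1, d^-_{\overrightarrow{G}}(v)\}$.

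The slightly less routine item is (iii). One direction is immediate: if $C = v_1 v_2 \dots v_k v_1$ is a directed cycle in $\overrightarrow{G}$, then its consecutive edges carry the pairwise-distinct consecutive colours $c_{v_2}, c_{v_3}, \dots, c_{v_k}, c_{v_1}$, so $C$ is properly coloured. For the converse, I would argue locally: the two edges of $C$ meeting at $v_i$ share a colour if and only if both carry colour $c_{v_i}$, which happens precisely when both are oriented into $v_i$. Hence no vertex of a properly coloured $C$ is a ``sink'' of $C$. Starting from any edge of $C$, say the one oriented $v_1 \to v_2$, this forbids $v_3 \to v_2$ and therefore forces $v_2 \to v_3$; iterating around $C$ forces $v_i \to v_{i+1}$ for every $i$, exhibiting $C$ as a directed cycle of $\overrightarrow{G}$. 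The only place where any care is needed is in ruling out this ``double-sink'' configuration and then propagating the orientation around $C$; otherwise the verification of (i)--(iii) is entirely mechanical.
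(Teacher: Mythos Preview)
Your construction is exactly the one used in the paper: colour each edge by the label of its head in $\overrightarrow{G}$, and the paper then simply declares that ``the proposition follows.'' Your verification of (i)--(iii) is correct and merely fills in the details the paper omits, so the approaches are identical.
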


\begin{proof}
Let $\{c_x: x \in V(G)\}$ be a set of distinct colours.
Define an edge-colouring $c$ of $G$ such that for every edge $xy \in E(G)$, $c(xy) = c_y$ if and only if $\overrightarrow{xy}$ is in $\overrightarrow{G}$.
Let $G^c$ be the graph $G$ with edge-colouring~$c$.
The proposition follows.
\end{proof}

Let $T_{2m}$ be a tournament on $2m$ vertices obtained from a regular tournament $T$ on $2m-1$ vertices by adding a directed edge from a new vertex~$x$ to every $y \in V(T)$.
Note that $\Delta^-(T_{2m}) = m$ and $T_{2m}$ does not contain any directed Hamiltonian cycle.
Therefore, by Proposition~\ref{construction}, there exists a $K_{2m}^c$ (corresponding to $T_{2m}$) with $\Delta(K_{2m}^c) = \Delta^-(T_{2m}) = m$ and $\delta^c(K_{2m}^c) = m$ that does not contain any properly coloured Hamiltonian cycle.

In the proposition below, we present yet another $K_n^c$, which also shows that Conjecture~\ref{conj} is sharp for even~$n$.
Moreover, this construction of $K_n^c$ can be generalized to forbid any properly coloured paths and cycles of arbitrary length.
We would like to point out that, by a suitable choice of tournament, Proposition~\ref{construction} also yields the same result for properly coloured cycles but not for properly coloured paths.
An edge-coloured graph $G$ is \emph{rainbow} if all edges have distinct colours.

\begin{prp} \label{prp:excol}
Let $\ell$ and $n$ be integers with $1 \le \ell \le n/2$.
Then there exists an edge-coloured graph $K_{n}^c$ on $n$ vertices with $\Dmon(K_{n}^c) =  n - \ell $ and $\delta^c(K_n^c)= \ell$ such that all properly coloured cycles in $K_{n}^c$ have length less than $2\ell$ and all properly coloured paths in $K_{n}^c$ have length less than $2\ell+1$.
\end{prp}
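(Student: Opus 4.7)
The plan is to exhibit $K_n^c$ explicitly and then analyse its properly coloured paths and cycles via a ``run decomposition.'' Take $V=A\cup B$ with $|A|=\ell-1$ and $|B|=n-\ell+1$, and single out one vertex $\sigma\in B$. Define the colouring by: $c(bb')=c_0$ for all distinct $b,b'\in B$; the edges inside $A$ are given pairwise distinct ``rainbow'' colours; and for each $x\in A$, the lone edge $x\sigma$ receives a colour $\beta_x$, while every edge $xb$ with $b\in B\setminus\{\sigma\}$ receives a single common colour $\alpha_x$. The colour $c_0$, the $\alpha_x$'s, the $\beta_x$'s and the rainbow palette on $A$ are all pairwise distinct.

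A vertex-by-vertex inspection shows that every vertex sees exactly $\ell$ distinct colours and that its largest monochromatic class has size $n-\ell$: for $x\in A$ this class is the $\alpha_x$-star to $B\setminus\{\sigma\}$, while for any $b\in B$ it is the $c_0$-clique to $B\setminus\{b\}$. Hence $\Dmon(K_n^c)=n-\ell$ and $\delta^c(K_n^c)=\ell$.

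To bound properly coloured paths and cycles, I would take any such $P$ and split $V(P)$ into maximal runs of consecutive vertices in $A$ and consecutive vertices in $B$. Two local observations drive the count. (i) A $B$-run of three or more vertices would contain two successive $c_0$-edges, so every $B$-run has at most two vertices. (ii) If a single $x_i\in A$ forms an interior run of $P$, both its $P$-neighbours lie in $B$, and the two incident edges would both be coloured $\alpha_{x_i}$ unless one of these $B$-neighbours is $\sigma$. Since $\sigma$ has at most two $P$-neighbours, at most two singleton interior $A$-runs can occur in $P$; moreover, whenever one does, the $B$-run containing $\sigma$ has size one.

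With (i) and (ii) in hand the counting is routine. Let $r,s$ denote the numbers of $A$- and $B$-runs respectively; for a path $|r-s|\le 1$ and for a cycle $r=s$. Using $|V(P)\cap A|\le\ell-1$, the fact that each non-singleton interior $A$-run consumes at least two $A$-vertices, together with (i) and (ii), a short case split on the number of singleton interior $A$-runs (at most two) bounds $|V(P)|$ above by $2\ell+1$ when $P$ is a path and by $2\ell-1$ when $P$ is a cycle, giving exactly the required length bounds. The principal obstacle is ensuring no subcase is missed in this case split---in particular the possibilities that $\sigma$ lies in a $B$-run of size two, that $\sigma$ is an endpoint of $P$, or that an $A$-run at an endpoint of $P$ is exempt from (ii)---but each case yields the same arithmetic bound.
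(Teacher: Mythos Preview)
Your construction is literally the paper's, just relabelled: set $\sigma=x_1$, $A=X\setminus\{x_1\}$, $B=Y\cup\{x_1\}$, and the two colourings coincide (with $c_0=1$, $\alpha_{x_i}=i$, and $\beta_{x_i}$ the rainbow colour on $x_ix_1$). The run-decomposition analysis is also the same idea.

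One small correction to your (ii): the assertion that ``whenever one [singleton interior $A$-run] does, the $B$-run containing $\sigma$ has size one'' is false when there is exactly one such singleton --- e.g.\ in a properly coloured segment $b_1\,\sigma\,x\,b_2$ with $b_1,b_2\in B\setminus\{\sigma\}$ and $x\in A$, the $B$-run $\{b_1,\sigma\}$ has size two. The claim does hold when there are \emph{two} singleton interior $A$-runs, since then both $P$-neighbours of $\sigma$ lie in $A$; and that is the only case in which your count actually needs it, so the argument still goes through.

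The paper avoids all of this bookkeeping by placing the special vertex on the other side of the partition. With $x_1\in X$, the rule ``between two consecutive $Y$-runs one must see at least two $X$-vertices'' holds without exception (a lone $x_i$ flanked by two $Y$-vertices would have both incident edges coloured $i$), so there are no singleton cases, no role for $\sigma$, and the cycle bound follows in two lines; the equality case $|C|=2\ell$ is then ruled out by observing that neither endpoint of any $X$-run can be $x_1$. Moving your $\sigma$ from $B$ to $A$ would give you the same simplification.
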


\begin{proof}
Let the vertices of $K_{n}$ be $x_1, \dots, x_{\ell}, y_1, \dots,y_{n - \ell}$.
Set $X = \{x_1, \dots, x_{\ell}\}$ and $Y = \{ y_1, \dots,y_{n - \ell} \}$.
Let $c : E(K_n) \rightarrow \mathbb{N}$ be an edge-colouring of $K_n$ such that:
\begin{itemize}
 \item[(a)] $c(x_iy_j) = i$ for all $1 \le i \le \ell$ and all $1 \le j \le n - \ell$;
 \item[(b)] $c(y_iy_j) = 1$ for all $1 \le i < j \le n - \ell$;
 \item[(c)] $K_n[X]$ is rainbow under $c$ and does not contain any colour in $\{ 1, \dots, {\ell} \}$.
\end{itemize}
Note that $\Dmon(K_n^c) = |Y| = n -\ell$ and $\delta^c(K_n^c) = |X| = \ell$.

Suppose $C$ is a properly coloured cycle in~$K_n^c$.
Let $P_1, P_2, \dots,P_r$ be the paths of $C$ induced on the vertex set $Y$ (where $P_i$ may consist of one vertex).
Since each $P_i$ is properly coloured, (b) implies that $1 \le |P_i| \le 2 $.
Note that after seeing one $P_i$ we must immediately see at least two consecutive vertices in $X$, so 
\begin{align}
\label{XY}
|X| \ge |X \cap V(C)| \ge 2 r  \ge 2 \lceil |Y \cap V(C)| /2 \rceil \ge |Y \cap V(C)|.
\end{align}
Therefore 
\begin{align*}
|C| = |X \cap V(C)| + |Y \cap V(C) | \le 2 |X| = 2 \ell.
\end{align*}
If $|C| = 2 \ell$, then we must have equality in~\eqref{XY} and so $|X| = 2r = |Y \cap V(C)|$.
Hence, we must have $|P_i| =2$ for all $i \le r$.
Thus, each $P_i$ is an edge of colour~$1$ by~(b).
Therefore, after seeing one $P_i$ we must see at least two vertices $x_j$ with $j \ne 1$ by (a) and~(b) before seeing another $P_{i'}$.
This implies that $|X \setminus  x_1  | \ge 2r = |X|$, a contradiction. 
Hence, all properly coloured cycles in $K_n^c$ have length less than $2 \ell$.
A similar argument shows that all properly coloured paths in $K_n^c$ have length less than $2 \ell+1$.
\end{proof}

\section{absorbing cycle} \label{sec:abscycle}

The aim of this section is to show that there exists a small cycle $C$ in $K_n^c$ such that, for any properly coloured path $P$ with $V(C) \cap V(P) = \emptyset$ and $|P| \ge 4$, there exists a properly coloured cycle $C'$ with $V(C') = V(P) \cup V(C)$.

\begin{lma}[Absorbing cycle lemma] \label{lma:abscycle}
Let $0 < \eps < 1/2$.
There exists an integer $n_0$ such that the following holds whenever $n \ge n_0$.
Suppose that $K_n^c$ is an edge-coloured $K_n$ with $\Dmon(K_n^c) \le (1/2 - \eps)n $.
Then there exists a properly coloured cycle $C$ with $|C| \le 2^{-5} \eps^{4 \eps^{-2}+2} n$ such that, 
for any properly coloured path $P$ in $K_n^c \setminus V(C)$ with $|P| \ge 4$, $K_n^c$ contains a properly coloured cycle $C'$ with $V(C') = V(C) \cup V(P)$.
\end{lma}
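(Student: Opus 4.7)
The plan is to adapt the absorption method of R\"odl, Ruci\'nski, and Szemer\'edi to the properly coloured setting. Define the \emph{type} of a properly coloured path $P = p_1 p_2 \dots p_k$ (with $k \ge 4$) to be the 4-tuple $\tau(P) = (p_1, p_k, c(p_1 p_2), c(p_{k-1} p_k))$; the set of conceivable types has size at most $n^4$. Call an ordered 4-path $w u v w'$ in $K_n^c$ an \emph{absorber} for the type $(a, b, \alpha, \beta)$ if $wuvw'$ is properly coloured and
$$
c(u a) \notin \{c(wu), \alpha\}, \qquad c(v b) \notin \{c(v w'), \beta\}.
$$
The central observation is this: if $C$ is a properly coloured cycle containing $w, u, v, w'$ as four consecutive vertices and $P$ is a properly coloured path disjoint from $C$ whose type $\tau(P)$ is absorbed by $wuvw'$, then replacing the edge $uv$ of $C$ with the path $u p_1 p_2 \dots p_k v$ yields a properly coloured cycle $C'$ with $V(C') = V(C) \cup V(P)$. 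Hence it suffices to construct a small properly coloured cycle $C$ whose 4-vertex consecutive subpaths collectively cover every possible type.

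The construction has three steps. \emph{Counting.} For each fixed type $(a, b, \alpha, \beta)$, I would show that at least $\gamma n^4$ ordered 4-tuples of distinct vertices in $V(K_n^c) \setminus \{a, b\}$ form properly coloured 4-paths absorbing this type, with $\gamma = \gamma(\eps) > 0$. Each colour constraint (proper colouring of $wuvw'$ and the four avoidance conditions at its ends) excludes at most $\Dmon(K_n^c) \le (1/2 - \eps) n$ choices for an appropriately placed vertex, leaving a positive-density set of admissible 4-tuples; a direct computation gives $\gamma = \Theta(\eps^2)$. \emph{Random sampling.} Include each vertex of $V(K_n^c)$ independently with probability $p = p(\eps)$ into a reservoir $R$. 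A Chernoff bound combined with a union bound over the $\le n^4$ types shows, with positive probability, that $|R| \le 2pn$ and that at least $\tfrac12 \gamma p^4 n^4$ absorbers inside $R$ exist for every type. \emph{Assembly.} From $R$, greedily pick pairwise vertex-disjoint absorber 4-paths $F_1, \dots, F_t$ whose union of absorbed types covers every type; this is feasible because removing the $O(t)$ committed vertices kills at most $O(t \cdot (pn)^3)$ absorbers per type, leaving a strict majority so long as $t = O_\eps(pn)$. Then chain $F_1, \dots, F_t$ into a single properly coloured cycle by inserting short properly coloured connecting paths between consecutive $F_i$; such connections can be built one vertex at a time, since at each step any new intermediate vertex must avoid only a bounded set of forbidden colours and the bound $\Dmon(K_n^c) \le (1/2 - \eps) n$ leaves at least $2\eps n - O(1)$ admissible candidates. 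Tuning $p$ and bounding the lengths of the connecting paths then yields $|C| \le 2^{-5} \eps^{4\eps^{-2}+2} n$.

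The main obstacle is the assembly step. The number of types is $O(n^4)$, but each single absorber covers a positive fraction of them, so information-theoretically only $O_\eps(\log n)$ absorbers are required. The difficulty is that the absorbers must be spliced into a single properly coloured \emph{cycle}, not merely a vertex-disjoint union of paths: every connection edge inserted between two absorbers must respect proper colouring with its neighbours and must not interfere with the colour constraints at the adjacent absorber 4-subpaths, and all connections and absorbers must remain pairwise vertex-disjoint. Managing this bookkeeping while keeping $|C|$ below the stated fraction of $n$ — and producing the explicit exponent $4\eps^{-2}+2$ in the size bound, likely arising from iterating a short colour-avoiding path-extension lemma — is the technical heart of the argument.
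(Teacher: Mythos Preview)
Your architecture matches the paper's: absorbers indexed by the end-data of a path, a counting lemma giving $\Theta(\eps^2 n^4)$ absorbers per type, random selection of a small covering family of vertex-disjoint $4$-paths, then linking these into one cycle. The genuine gap is the linking step. Your claim that connections ``can be built one vertex at a time'' because $\Dmon \le (1/2-\eps)n$ leaves $2\eps n - O(1)$ candidates is only an argument for \emph{extending} a properly coloured path, not for steering it to a prescribed terminal edge. Even to connect edges $v_3 v_4$ and $v'_1 v'_2$ through a single intermediate vertex $w$ you need $c(v_4 w) \ne c(w v'_1)$, and nothing like a $\Dmon$ bound controls how many $w$ fail this. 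The paper proves a real connecting lemma (Lemma~\ref{lma:ifar}): for any two disjoint edges there is some $i_0 \le 2\eps^{-2}$ with at least $(\eps^2 n)^{i_0}$ properly coloured paths of order $i_0$ joining them. The proof is an iterated growth argument on auxiliary bipartite graphs, tracking a set $Y_i$ of vertices that are ``strongly $i$-far'' from the starting edge (reachable by many length-$i$ paths even after deleting any single colour class at the endpoint) and showing $|Y_i| \ge \Dmon + 2\eps^2 n$ for some $i \le 2\eps^{-2}$. This is exactly where the exponent $\eps^{-2}$ enters, as you guessed, but greedy extension is no substitute for it.

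On the sampling side the paper is simpler than your plan. It does not sample vertices into a reservoir and then run set cover; it samples ordered $4$-tuples directly with probability of order $\eps^{4\eps^{-2}+4} n^{-3}$, deletes one tuple from each intersecting pair, and is left with at most $2^{-7}\eps^{4\eps^{-2}+4} n$ vertex-disjoint $4$-paths that already cover every ordered quadruple $(x_1,x_2;y_1,y_2)$ of vertices. (The paper indexes by the first two and last two vertices of $P$ rather than by endpoints-plus-end-colours; since $c(x_1 x_2)$ is determined by $x_1,x_2$ the two schemes are equivalent, but the vertex version makes the $n^4$ union bound immediate.) Your $O_\eps(\log n)$ set-cover remark is unnecessary and not obviously justified; a family of size linear in $n$ is both what random sampling delivers and all that is needed, since each connecting path from Lemma~\ref{lma:ifar} has length $O(\eps^{-2})$ and the product lands under $2^{-5}\eps^{4\eps^{-2}+2} n$.
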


We will need the following definition.

\begin{dfn}
Let $x_1$, $x_2$, $y_1$, $y_2$ be distinct vertices in $V(K_n^c)$.
A path $P$ is an \emph{absorbing path for $(x_1, x_2; y_1, y_2)$} if the following conditions hold:
\begin{enumerate}
	\item[(i)] $P = z_1z_2z_3z_4$ is a properly coloured path of order~$4$; 
	\item[(ii)] $V(P) \cap  \{ x_1, x_2, y_1, y_2\} = \emptyset$;
	\item[(iii)] both $z_1 z_2 x_1 x_2 $ and $y_1 y_2 z_3 z_4$ are properly coloured paths.
\end{enumerate}
\end{dfn}
Note that the ordering of $(x_1, x_2; y_1, y_2)$ is important.
Given distinct vertices $x_1,x_2 ,y_1, y_2$, let $\mathcal{L}(x_1, x_2; y_1, y_2)$ be the set of absorbing paths $P$ for $(x_1, x_2; y_1, y_2)$.
By the definition of an absorbing path, we have the following proposition.

\begin{prp} \label{prp:abspath}
Let $P' = x_1 x_2 \dots x_{\ell-1} x_{\ell}$ be a properly coloured path with $\ell \ge 4$. 
Let $P = z_1 z_2 z_3 z_4$ be an absorbing path for $(x_1, x_2; x_{\ell-1}, x_{\ell} )$ with $V(P) \cap V(P') = \emptyset$.
Then $z_1z_2 P' z_3 z_4 = z_1 z_2 x_1 x_2 \dots x_{\ell -1} x_{\ell} z_3z_4$ is a properly coloured path.
\end{prp}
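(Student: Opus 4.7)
The plan is a direct verification: I would simply check that the claimed concatenation forms a valid path (i.e., has distinct vertices) and that every pair of consecutive edges along it receives different colours. There is no combinatorial obstacle to overcome here; the content of the proposition is essentially that the definition of ``absorbing path'' was crafted to make exactly this splicing work.

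First I would dispose of vertex-disjointness. The sequence in question is $z_1, z_2, x_1, \dots, x_\ell, z_3, z_4$. Since $P'$ is a path, the $x_j$ are distinct; since $P$ is a path (condition (i)), the $z_i$ are distinct; and by hypothesis $V(P) \cap V(P') = \emptyset$, so no $z_i$ equals any $x_j$. Hence all $\ell+4$ listed vertices are distinct.

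Next I would list the edges of the concatenation in order,
\[
z_1z_2,\ z_2x_1,\ x_1x_2,\ x_2x_3,\ \ldots,\ x_{\ell-1}x_\ell,\ x_\ell z_3,\ z_3z_4,
\]
and check that each consecutive pair has distinct colours by invoking one of three already properly coloured subpaths. For the pairs $(z_1z_2, z_2x_1)$ and $(z_2x_1, x_1x_2)$, I would appeal to condition (iii) applied to $z_1z_2x_1x_2$. For each interior pair $(x_ix_{i+1}, x_{i+1}x_{i+2})$ with $1 \le i \le \ell-2$, I would appeal to the hypothesis that $P'$ is properly coloured. For the pairs $(x_{\ell-1}x_\ell, x_\ell z_3)$ and $(x_\ell z_3, z_3z_4)$, I would appeal to condition (iii) applied to $y_1y_2z_3z_4 = x_{\ell-1}x_\ell z_3z_4$ (the identification of $y_1,y_2$ with $x_{\ell-1},x_\ell$ being exactly the choice made in selecting $P \in \mathcal{L}(x_1,x_2;x_{\ell-1},x_\ell)$).

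The main ``obstacle,'' such as it is, is just checking that the three properly coloured subpaths supplied by the definition cover every adjacency of the concatenation and overlap in the right way at the seams $z_2x_1$ and $x_\ell z_3$; the bookkeeping above confirms this. Once all consecutive-edge-pairs are verified to have distinct colours, the concatenation is by definition a properly coloured path, and the proposition follows.
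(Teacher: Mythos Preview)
Your proposal is correct and matches the paper's approach: the paper gives no explicit proof at all, simply stating that the proposition follows ``by the definition of an absorbing path,'' and your write-up is precisely the routine verification that unpacks this. There is nothing to add.
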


Lemma~\ref{lma:abscycle} will be proved as follows.
Suppose that $\Dmon(K_n^c) \le ( 1/2 - \eps ) n$. 
In the next lemma, Lemma~\ref{lma:abspath}, we show that $\mathcal{L}(x_1, x_2; y_1, y_2)$ is large for any distinct $x_1,x_2 ,y_1, y_2 \in V(K_n^c)$.
By a simple probabilistic argument, Lemma~\ref{lma:absorbing} shows that there exists a small family $\mathcal{F}'$ of vertex-disjoint properly coloured paths (of order~$4$) such that, for any distinct $x_1,x_2 ,y_1, y_2 \in V(K_n^c)$, $\mathcal{F}'$ contains at least one absorbing path for $(x_1, x_2; y_1, y_2)$.
Finally, we join all paths in $\mathcal{F}'$ into one short properly coloured cycle $C$ using Lemma~\ref{lma:ifar}.
Moreover, $C$ satisfies the desired property in Lemma~\ref{lma:abscycle}.

\begin{lma} \label{lma:abspath}
Let $0 < \eps < 1/8$ and let $n \ge 5 \eps^{-1}$ be an integer.
Suppose that $K_n^c$ is an edge-coloured $K_n$ with $\Dmon(K_n^c) \le (1/2 - \eps)n $.
Then $| \mathcal{L}(x_1, x_2; y_1, y_2) |\ge \eps^2 n^4/4 $ for all distinct vertices $x_1,x_2 ,y_1, y_2 \in V(K_n^c)$.
\end{lma}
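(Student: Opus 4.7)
The plan is to construct absorbing paths $P = z_1 z_2 z_3 z_4$ for $(x_1, x_2; y_1, y_2)$ by picking the four vertices greedily in the order $z_2, z_1, z_3, z_4$, and to lower-bound the number of admissible choices at each step using the hypothesis $\Dmon(K_n^c) \le (1/2 - \eps)n$. At every step the vertex must avoid (a) a few distinguished or previously selected vertices, and (b) the endpoints of edges of a specified colour incident to the current endpoint of the partial path (one such monochromatic ``forbidden set'' for the first two selections, two for the last two). Each such set has size at most $\Dmon(K_n^c)$.

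First I choose $z_2$, requiring only $c(z_2 x_1) \ne c(x_1 x_2)$ and $z_2 \notin \{x_1, x_2, y_1, y_2\}$. The colour constraint excludes at most $\Dmon(K_n^c)$ vertices at $x_1$, a set which automatically contains $x_2$, so at least $n - 3 - \Dmon(K_n^c) \ge (1/2 + \eps)n - 3$ choices remain. For $z_1$, the constraint $c(z_1 z_2) \ne c(z_2 x_1)$ excludes at most $\Dmon(K_n^c)$ vertices at $z_2$; that set contains $x_1$, so at least $(1/2 + \eps)n - 4$ choices remain after further excluding $z_2, x_2, y_1, y_2$.

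The choices of $z_3$ and $z_4$ are more restrictive because each involves two colour constraints. For $z_3$, the conditions $c(y_2 z_3) \ne c(y_1 y_2)$ and $c(z_2 z_3) \ne c(z_1 z_2)$ together with distinctness leave at least $n - 4 - 2\Dmon(K_n^c) \ge 2\eps n - 4$ valid vertices (noting $y_1$ and $z_1$ already lie in the respective monochromatic forbidden sets). An analogous count for $z_4$, using the two constraints at $z_3$, yields at least $2 \eps n - 5$ choices. Multiplying the four bounds and invoking $n \ge 5 \eps^{-1}$ to absorb each additive constant into at most $\eps n$ (half of the leading term in the last two factors) gives
\[
|\mathcal{L}(x_1, x_2; y_1, y_2)| \ \ge \ \tfrac{n}{2}\cdot \tfrac{n}{2}\cdot \eps n\cdot \eps n \ = \ \tfrac{\eps^2 n^4}{4}.
\]

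No real obstacle is expected, since this is a direct greedy enumeration. The only subtlety is the bookkeeping: at each step certain vertices that must be avoided for distinctness reasons (such as $x_2$ when picking $z_2$, $x_1$ when picking $z_1$, $y_1$ and $z_1$ when picking $z_3$, and $y_2, z_2$ when picking $z_4$) automatically lie in one of the monochromatic forbidden sets, so they should not be subtracted twice. This observation keeps the additive losses small enough for the constant $\eps^2/4$ to come out precisely at the stated threshold $n \ge 5 \eps^{-1}$.
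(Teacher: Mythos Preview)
Your proposal is correct and follows essentially the same greedy argument as the paper's proof: both pick $z_2$ (one forbidden colour at $x_1$), then $z_1$ (one forbidden colour at $z_2$), then $z_3$ (two forbidden colours), then $z_4$ (two forbidden colours), arriving at the bound $\tfrac{n}{2}\cdot\tfrac{n}{2}\cdot \eps n\cdot \eps n = \eps^2 n^4/4$. The only differences are cosmetic bookkeeping in how the additive constants are tracked before being absorbed using $n \ge 5\eps^{-1}$.
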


\begin{proof}
Fix distinct vertices $x_1,x_2 ,y_1, y_2 \in V(K_n^c)$.
Set $V' = V(K_n^c) \setminus \{x_1, x_2, y_1, y_2\}$ and $\Delta = \Dmon(K_n^c)$.
We can find two distinct vertices $z_1$, $z_2$ in $V'$ such that $z_1 z_2x_1x_2$ is a properly coloured path.
Note that there are $(|V'| - (\Delta-1)) (|V'| - (\Delta-1)-1) \ge n^2/4$ choices for $z_1$ and $z_2$.
The number of vertices $z_3 \in V' \setminus \{z_1,z_2\}$ such that $c(z_3z_2) \ne c(z_2 z_1)$ and $c(z_3y_2 ) \ne c(y_1y_2)$ is at least
\begin{align*}
	 (|V'|-2) - 2( \Delta -1) = |V'| - 2\Delta \ge 2 \eps n - 4 \ge \eps n.
\end{align*}
Pick one such $z_3$.
By a similar argument, the number of vertices $z_4 \in V' \setminus \{z_1,z_2,z_3\}$ such that $c(z_3 z_4) \ne c(z_3 y_2)$ and $c(z_3 z_4) \ne c(z_3 z_2 ) $ is at least $\eps n $. 
Pick one such $z_4$.
Notice that $z_1z_2z_3z_4$ is an absorbing path for $(x_1, x_2; y_1, y_2)$.
Furthermore, there are at least $n^2/4 \times \eps n \times \eps n = \eps^2 n^4/4$ many choices of $z_1$, $z_2$, $z_3$ and~$z_4$.
Therefore, the proof is completed.
\end{proof}

The next lemma is proved by a simple probabilistic argument since each $\mathcal{L}(x_1, x_2; y_1, y_2)$ is large.
We will need the following Chernoff bound for the binomial distribution
(see e.g.~\cite{MR1885388}).
Recall that the binomial random variable with parameters $(n,p)$ is the sum
of $n$ independent Bernoulli variables, each taking value $1$ with probability $p$,
or $0$ with probability $1-p$.

\begin{prp}\label{prop:chernoff}
Suppose that $X$ has the binomial distribution and $0<a<3/2$.
Then $\mathbb{P}(|X - \mathbb{E}X| \ge a\mathbb{E}X) \le 2 e^{-a^2\mathbb{E}X /3}$.
\end{prp}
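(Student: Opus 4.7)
The plan is to prove this via the standard exponential moment (Chernoff) method, treating the upper and lower tails of $X$ separately and combining them via a union bound. Write $X = \sum_{i=1}^{n} X_i$ where the $X_i$ are independent Bernoulli variables with common parameter $p$, and set $\mu := \mathbb{E}X = np$.

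For the upper tail, for any $\lambda > 0$, Markov's inequality applied to $e^{\lambda X}$ gives $\mathbb{P}(X \ge (1+a)\mu) \le e^{-\lambda(1+a)\mu}\, \mathbb{E}[e^{\lambda X}]$. Independence of the $X_i$ yields $\mathbb{E}[e^{\lambda X}] = (1 - p + pe^{\lambda})^n$, and the pointwise inequality $1+x \le e^x$ then gives $\mathbb{E}[e^{\lambda X}] \le e^{\mu(e^{\lambda}-1)}$. Optimizing in $\lambda$ by taking $\lambda = \ln(1+a)$ produces
\[
\mathbb{P}(X \ge (1+a)\mu) \le \exp\!\left(-\mu\bigl[(1+a)\ln(1+a) - a\bigr]\right).
\]
The final step for the upper tail is the elementary calculus inequality $(1+a)\ln(1+a) - a \ge a^2/3$ valid for $0 < a \le 3/2$, which I would verify by showing that $f(a) := (1+a)\ln(1+a) - a - a^2/3$ satisfies $f(0) = 0$ and has a nonnegative derivative on the interval (a short one-variable check).

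The lower tail $\mathbb{P}(X \le (1-a)\mu)$ is handled analogously; note that if $a \ge 1$ the event is empty or reduces to $\{X = 0\}$, so only $0 < a < 1$ is genuinely interesting. Applying the same technique with $\lambda < 0$, or equivalently running the argument with $n - X$ in place of $X$, yields the stronger bound $\exp(-a^2 \mu/2) \le \exp(-a^2 \mu/3)$ via the companion inequality $(1-a)\ln(1-a) + a \ge a^2/2$ on $(0,1)$. Summing the two tail bounds produces the factor of $2$ in the claimed estimate.

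The only nonroutine point is the calculus inequality $(1+a)\ln(1+a) - a \ge a^2/3$ on $(0, 3/2]$: this is precisely where the hypothesis $a < 3/2$ enters, since the inequality fails for slightly larger $a$, and the constant $3$ in the exponent is tight at $a = 1$. Everything else is a standard application of the moment-generating-function method for sums of independent Bernoulli variables.
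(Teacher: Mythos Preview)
The paper does not actually prove this proposition; it simply quotes it as a standard Chernoff bound with a reference to Alon and Spencer's \emph{The Probabilistic Method}. Your sketch follows exactly the standard exponential-moment argument one finds there, so in that sense you have reproduced the intended proof rather than offered an alternative.

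Your argument is correct in all essential respects. One small inaccuracy in your commentary: the remark that ``the constant $3$ in the exponent is tight at $a=1$'' is not right. At $a=1$ one has $(1+a)\ln(1+a)-a = 2\ln 2 - 1 \approx 0.386$, which is strictly larger than $a^2/3 = 1/3$. The inequality $(1+a)\ln(1+a)-a \ge a^2/3$ in fact holds with strict inequality throughout $(0,3/2]$ and first fails at some $a_0$ strictly between $3/2$ and $2$; the restriction $a < 3/2$ in the statement is a convenient clean threshold rather than the exact boundary. This does not affect the validity of the proof, only the side remark.
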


\begin{lma} \label{lma:absorbing}
Let $0 < \eps < 1/2$.
Then there exists an integer $n_0$ such that whenever $n \ge n_0$ the following holds.
Suppose that $K_n^c$ is an edge-coloured $K_n$ with $\Dmon(K_n^c) \le (1/2 - \eps)n $.
Then there exists a family $\mathcal{F}'$ of vertex-disjoint properly coloured paths of order~$4$ such that $|\mathcal{F}'| \le 2^{-7} \eps^2 n$ and, for all distinct vertices $x_1, x_2, y_1, y_2 \in V(K_n^c)$, $|\mathcal{L}(x_1, x_2; y_1, y_2) \cap \mathcal{F}'| \ge 1$.
\end{lma}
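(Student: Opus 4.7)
The plan is to construct $\mathcal{F}'$ by a standard probabilistic selection followed by a clean-up step, using Lemma~\ref{lma:abspath} to guarantee that every ordered $4$-tuple of distinct vertices has many absorbing paths. Let $\mathcal{F}$ denote the set of all properly coloured paths of order $4$ in $K_n^c$, each recorded as an ordered quadruple $z_1 z_2 z_3 z_4$, so that $|\mathcal{F}| \le n^4$. Choose each member of $\mathcal{F}$ independently with probability $p = 2^{-10} \eps^2 n^{-3}$, producing a random subfamily $\mathcal{F}^*$.

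I would then estimate three quantities in expectation. First, $\mathbb{E}|\mathcal{F}^*| \le p n^4 \le 2^{-10} \eps^2 n$, so Markov's inequality gives $|\mathcal{F}^*| \le 2^{-8} \eps^2 n$ with probability at least $3/4$. Second, for any distinct $x_1, x_2, y_1, y_2 \in V(K_n^c)$, Lemma~\ref{lma:abspath} yields $|\mathcal{L}(x_1, x_2; y_1, y_2)| \ge \eps^2 n^4/4$, so
\[
\mathbb{E}\bigl|\mathcal{L}(x_1, x_2; y_1, y_2) \cap \mathcal{F}^*\bigr| \;\ge\; p \eps^2 n^4/4 \;=\; 2^{-12} \eps^4 n;
\]
Proposition~\ref{prop:chernoff} with $a = 1/2$, followed by a union bound over the at most $n^4$ ordered $4$-tuples, then shows that for $n$ large the inequality $|\mathcal{L}(x_1,x_2;y_1,y_2) \cap \mathcal{F}^*| \ge 2^{-13} \eps^4 n$ holds for every $4$-tuple simultaneously with probability at least $3/4$. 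Third, since each path of $\mathcal{F}$ shares a vertex with at most $16 n^3$ other members of $\mathcal{F}$, the expected number of intersecting unordered pairs in $\mathcal{F}^*$ is at most $8 n^7 p^2 = 2^{-17} \eps^4 n$, so Markov gives at most $2^{-14} \eps^4 n$ such pairs with probability at least $7/8$.

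For $n$ sufficiently large, all three events hold simultaneously with positive probability, so there exists a realisation of $\mathcal{F}^*$ satisfying the three bounds above. To obtain the desired vertex-disjoint family, I would delete one path from each intersecting pair to produce $\mathcal{F}' \subseteq \mathcal{F}^*$. Then $|\mathcal{F}'| \le 2^{-8} \eps^2 n \le 2^{-7} \eps^2 n$, and for every $4$-tuple at most $2^{-14} \eps^4 n$ absorbing paths are removed, leaving at least $2^{-13} \eps^4 n - 2^{-14} \eps^4 n = 2^{-14} \eps^4 n \ge 1$ absorbing paths, as required.

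The only real technical point is to balance the constants so that the Chernoff tail beats the union bound over all $n^4$ $4$-tuples while Markov on intersecting pairs still dominates the absorbing-path lower bound. The scaling $p = \Theta(\eps^2 n^{-3})$ is essentially forced: both the expected number of absorbers per $4$-tuple and the expected number of intersecting pairs are then $\Theta(\eps^4 n)$, so the constants only need to be tuned so the latter is strictly smaller. Once $p$ is fixed, $n_0 = n_0(\eps)$ is chosen large enough that the Chernoff factor $\exp(-\Omega(\eps^4 n))$ overpowers the $n^4$ from the union bound; there is no deeper obstacle.
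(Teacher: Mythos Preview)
Your proposal is correct and follows essentially the same approach as the paper: random selection of $4$-tuples with probability $p = \Theta(\eps^2 n^{-3})$, Chernoff plus a union bound for the absorber counts, Markov for intersecting pairs, then deletion of one path from each bad pair. The only cosmetic differences are that the paper samples from all ordered $4$-tuples (and afterwards discards those that are not properly coloured paths) rather than from properly coloured paths directly, and uses Chernoff rather than Markov for the upper bound on $|\mathcal{F}^*|$; neither affects the argument.
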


\begin{proof}
Fix $0 < \eps <1/2$ and let $n_0 $ be a sufficiently large integer.
Let $K_n^c$ be an edge-coloured $K_n$ with $\Dmon(K_n^c) \le (1/2 - \eps)n $ and $n \ge n_0$.
Recall that each path is assumed to be directed.
A path $z_1 z_2 z_3z_4$ will be considered as a $4$-tuple $(z_1, z_2, z_3, z_4)$.
Choose a family~$\mathcal{F}$ of $4$-tuples in $V(K_n^c)$ by selecting each of the $n!/ (n-4)!$ possible $4$-tuples independently at random with probability 
\begin{align*}
p = 2^{-8} \eps^{2} \frac{ (n-4 )!}{(n-1)!} >  2^{-8}\eps^2 n^{-3}.
\end{align*}
Then by Proposition~\ref{prop:chernoff}
\begin{align}
|\mathcal{F}| & \le 2 \mathbb{E}|\mathcal{F}| =  2 p \frac{n!}{(n-4)!} = 2^{-7} \eps^{2}n \label{eqn:|F|}
\end{align}
with probability at least  $1 - 2e^{- \mathbb{E}|\mathcal{F}|/3} = 1 - 2e^{-  \eps^2 n/( 3 \times 2^8) } \ge 5/6$ since $n$ is large.
By Lemma~\ref{lma:abspath}, for every distinct vertices $x_1,x_2,y_1,y_2$, we have $|\mathcal{L} (x_1, x_2; y_1, y_2)| \ge \eps^2 n^4/4$.
Hence, given distinct vertices $x_1,x_2,y_1,y_2$, by Proposition~\ref{prop:chernoff} we have 
\begin{align}
|\mathcal{L}(x_1, x_2; y_1, y_2) \cap \mathcal{F}| & \ge  \mathbb{E}  |\mathcal{L}(x_1, x_2; y_1, y_2)  \cap \mathcal{F} | / 2 \nonumber \\
& = p |\mathcal{L}(x_1, x_2; y_1, y_2)| /2
> 2^{-11} \eps^{4} n
\label{eqn:L}
\end{align} 
with probability at least $1 - 2 e^{ -\mathbb{E} |\mathcal{L}(x_1, x_2; y_1, y_2)  \cap \mathcal{F} |/12 } > 1 -  2 e^{- c_0 n}$, where $c_0 = \eps^4 /(3 \times 2^{12})$.
By the union bound, $\mathcal{F}$ satisfies~\eqref{eqn:L} for all distinct vertices $x_1,x_2,y_1,y_2$ with probability at least $1 - 2 n^4 e^{- c_0 n} \ge 5/6$ since $n$ is large.

We say that two $4$-tuples $(a_1,a_2,a_3,a_4)$ and $(b_1,b_2,b_3,b_4)$ are \emph{intersecting} if $a_i = b_j$ for some $1 \le i,j\le 4$.
Furthermore, we can bound the expected number of intersecting pairs of $4$-tuples in $\mathcal{F}$ from above by
\begin{align}
	\frac{n!}{(n-4)!} \times 4^2 \times \frac{(n-1)!}{(n-4)!} \times p^2 = 2^{-12}\eps^4 n.
\nonumber
\end{align}
Thus, using Markov's inequality, we derive that with probability at least~$1/2$
\begin{align}
	\text{$\mathcal{F}$ contains at most $ 2^{-11}\eps^4 n$ intersecting pairs of $4$-tuples.} \label{eqn:F}
\end{align}
Hence, with positive probability, the family $\mathcal{F}$ satisfies \eqref{eqn:|F|}, \eqref{eqn:L} for all distinct vertices $x_1,x_2,y_1,y_2$, and~\eqref{eqn:F}.
Pick one such~$\mathcal{F}$.
We delete one $4$-tuple in each intersecting pair in~$\mathcal{F}$.
We further remove those $4$-tuples that are not absorbing paths.
We call the resulting family~$\mathcal{F}'$.
Note that $\mathcal{F}'$ satisfies
\begin{align}
|\mathcal{L}(x_1, x_2; y_1 ,y_2) \cap \mathcal{F}'| > & 2^{-11}\eps^4 n - 2^{-11}\eps^4 n = 0\nonumber
\end{align}
for all distinct vertices $x_1,x_2, y_1, y_2 \in V(K_n^c)$.
Since $\mathcal{F}'$ consists of pairwise disjoint $4$-tuples and each $4$-tuple in $\mathcal{F}'$ is an absorbing path, $\mathcal{F}'$ is a set of vertex-disjoint properly coloured paths of order~$4$.
\end{proof}

As mentioned earlier, in order to prove Lemma~\ref{lma:abscycle}, we join the paths in $\mathcal{F}'$ given by Lemma~\ref{lma:absorbing} into a short properly coloured cycle.
The lemma below shows that we join any two disjoint edges by a properly coloured path of constant length.

\begin{lma} \label{lma:ifar}
Let $0 < \eps < 1/100$.
Then there exists an integer $n_0$ such that whenever $n \ge n_0$ the following holds.
Suppose that $K_n^c$ is edge-coloured with $\Dmon(K_n^c) \le (1/2 - \eps)n $.
Let $v_1$, $v_2$, $v'_1$, $v'_2$ be distinct vertices.
Then there exists an integer $2 \le i_0 \le 2 \eps^{-2}$  such that there are at least $(\eps^2 n)^{i_0}$ paths $P$ with $|P| = i_0$ and $v_1v_2 P v'_1v'_2$ is a properly coloured path. 
\end{lma}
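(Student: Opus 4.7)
Set $\Delta:=\Dmon(K_n^c)\le (1/2-\eps)n$. The one-step estimate I would use throughout is: for any vertex $u\in V(K_n^c)$, any forbidden colour $c_0$, and any forbidden set $U\subseteq V(K_n^c)$ with $|U|\le 4\eps^{-2}$, the number of $w\in V(K_n^c)\setminus(U\cup\{u\})$ with $c(uw)\ne c_0$ is at least $n-1-|U|-\Delta\ge n/2$ for $n$ sufficiently large; and when two colours are forbidden, this count is still at least $n-1-|U|-2\Delta\ge \eps n$.

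I would prove the lemma by constructing properly coloured paths $v_1v_2z_1\dots z_{i_0}v'_1v'_2$ via a forward breadth-first expansion from $v_2$ combined with a backward expansion from $v'_1$, joined by a single middle edge. More precisely, for $1\le k\le \eps^{-2}$ I set $i_0=2k\in[2,2\eps^{-2}]$. By iterating the one-colour one-step estimate, the number of forward paths $v_1v_2z_1\dots z_k$ with the $z_j$ distinct and disjoint from $\{v_1,v_2,v'_1,v'_2\}$ is at least $(n/2)^k$, and similarly for backward paths $w_k\dots w_1v'_1v'_2$. A pair of such half-paths glues into the desired full path $v_1v_2z_1\dots z_k w_k\dots w_1v'_1v'_2$ exactly when the two halves are vertex-disjoint and the colour $c(z_kw_k)$ of the middle edge lies outside $\{c(z_{k-1}z_k),c(w_kw_{k-1})\}$.

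I would count valid gluings by choosing the $2k$ free vertices one at a time, in an order that defers the two-colour constraints to two specific steps — the middle-edge step and the final step at $v'_1$. The remaining $2k-2$ steps face only a single forbidden colour and each contribute at least $n/2$ choices, while the two distinguished steps face two forbidden colours and each contribute at least $\eps n$ choices. Multiplying, the total count is at least $(n/2)^{2k-2}\cdot(\eps n)^2\ge(\eps^2 n)^{2k}$ for $n$ large and $\eps<1/2$, giving the required lower bound for $i_0=2k$.

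\textbf{Main obstacle.} The delicate step is the middle-edge gluing. The constraint $c(z_kw_k)\ne c(w_kw_{k-1})$ is a triangle-like coupling at the common vertex $w_k$, and in the worst case the set $\{w:c(z_kw)=c(w w_{k-1})\}$ may be as large as $n$, far exceeding the naive $2\Delta$ bound on a two-colour constraint. To push the two-colour estimate through at this step, I plan to use an averaging argument: the sum $\sum_{w_{k-1}}|\{w:c(z_kw)=c(w w_{k-1})\}|$ is at most $(n-1)\Delta$, so the triangle-bad set has average size at most $\Delta$, and for a positive fraction of earlier choices the residual good set for $w_k$ still has size at least $\eps n$. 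The flexibility to pick $i_0$ anywhere in $[2,2\eps^{-2}]$ supplies the slack needed to absorb these averaging losses, so that at least one value of $i_0$ in the range meets the $(\eps^2 n)^{i_0}$ bound.
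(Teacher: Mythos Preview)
Your plan breaks down precisely at the obstacle you flag, and the averaging repair you sketch does not close the gap. Consider the closing vertex (call it $u$, the last internal vertex chosen, adjacent to both a previously chosen vertex $a$ and to $v'_1$). After the two ordinary colour constraints you have only about $n-2\Delta\ge 2\eps n$ candidates left for $u$; the triangle constraint removes the set $T(a):=\{u:c(au)=c(uv'_1)\}$. Your averaging identity $\sum_a |T(a)|\le n\Delta$ is correct, but it only yields an \emph{average} of $|T(a)|\le\Delta\le(1/2-\eps)n$. Since the residual slack is merely $2\eps n$, Markov's inequality gives no control: the fraction of $a$ with $|T(a)|\le\eps n$ could be zero. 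So the sentence ``for a positive fraction of earlier choices the residual good set for $w_k$ still has size at least $\eps n$'' does not follow from the bound you wrote, and nothing in the freedom to vary $i_0$ changes this, because the same closing constraint, with the same arithmetic, appears for every path length.

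What the paper does is exactly tailored to this difficulty. It defines a vertex $x$ to be \emph{$i$-far} if at least $(\eps^2 n)^i$ properly coloured paths $v_1v_2Px$ of order $i$ exist, and \emph{strongly $i$-far} if this remains true (up to a factor $1/2$) after deleting the edges of any single colour at $x$. A strongly $i$-far $x$ with $c(xv'_1)\ne c(v'_1v'_2)$ immediately yields the desired conclusion, because one can avoid the colour $c(xv'_1)$ on the last edge into $x$ --- this kills the triangle constraint. The whole proof is then a growth argument on auxiliary bipartite graphs $H_i$ (between the $i$-far vertices and $V'$) showing that the set $Y_i$ of strongly $i$-far vertices eventually satisfies $|Y_i|>\Delta+2\eps^2 n$, which forces $Y_i$ to meet the set $\{w:c(wv'_1)\ne c(v'_1v'_2)\}$ in at least $2\eps^2 n$ vertices. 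The flexibility in $i_0$ is used to let this growth run for $O(\eps^{-2})$ rounds, not to dodge a bad closing step. Your proposal would need to introduce an analogue of ``strongly $i$-far'' (i.e.\ track colour-robust reachability, not just reachability) to go through.
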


To illustrate the idea of the proof, we consider the following simpler problem.
Suppose that $K_n^c$ with $\Dmon(K_n^c) \le n/3-3$ and let $x_1,x_2, y \in V(K_n^c)$ be distinct.
We claim that there exist distinct vertices $w$ and $v$ such that $x_1x_2 w v y$ is a properly coloured path.
(In other words, we can join an edge $x_1x_2$ and a vertex $y$ into a properly coloured path of order~$5$.)
Let $V' = V(K_n^c) \setminus \{x_1,x_2,y\}$ and let $W$ be the set of vertices $w \in V'$ such that $c(x_1x_2) \ne c(x_2 w)$.
Hence, $|W| = n - 1 - \Dmon(K_n^c) - 1 > 2n/3$.
Define an auxiliary bipartite graph $H$ with vertex classes $W$ and $V'$ and edge set $E(H)$ such that for $w \in W$ and $v \in V'$, $w v \in E(H)$ if and only if $c(wv) \ne c(w x_2)$ in~$K_n^c$.
(Recall Section~\ref{notation} that we consider $W$ and $V'$ to be disjoint in $H$.)
Hence, if $w v \in E(H)$, then $x_1 x_2 w v$ is a properly coloured path.
Also, every $w \in W$ has degree at least $2n/3$ in~$H$. 
By an averaging argument, there exists a vertex $v \in V'$ with degree at least $4n/9$ in~$H$.
Recall that $\Dmon(K_n^c) \le n/3-3 < 4n /9$.
There exist distinct $w,w' \in N_H(v)$ such that $c(w v) \ne c(w'v )$.
Therefore, $x_1x_2 w v y$ or $x_1x_2 w' v y$ is a properly coloured path as claimed.

\begin{proof}[Proof of Lemma~$\ref{lma:ifar}$]
Fix $\eps$ and let $n_0$ be a sufficiently large integer.
Let $K_n^c$ be an edge-coloured complete graph with $\Dmon(K_n^c) \le (1/2 - \eps) n$ and $n \ge n_0$.
Set $\Delta = \Dmon(K_n^c)$, $V' = V(K_n^c) \setminus \{v_1,v_2, v'_1, v'_2\}$ and $n' = |V'| =  n - 4$.
We omit floors and ceilings for clarity of presentation.

For integers $i \ge 0$, we say that a vertex $x \in V'$ is \emph{$i$-far} from $(v_1,v_2)$ if there exist at least $(\eps^2 n)^i$ paths $P$ with $V(P) \subseteq V' \setminus x$ and $|P| = i$ such that $v_1v_2 P x$ is a properly coloured path. 
Note that any vertex $x \in V'$ with $c(xv_2) \ne c(v_1v_2)$ is $0$-far.
A vertex $x$ is \emph{strongly $i$-far} if for any colour $c'$, after removing all edges $xy$ with $c(xy)= c'$ there still exist at least $(\eps^2 n)^i/2$ paths $P$ with $V(P) \subseteq V' \setminus x$ and $|P| = i$ such that $v_1v_2 P x$ is a properly coloured path. 
Hence, if $x$ is $i$-far but not strongly $i$-far, then there exists a unique colour $c_i(x)$ such that $x$ is no longer $i$-far after removing all edges~$xy$ with $c(xy) = c_i(x)$.
Moreover, there are at least $(\eps^2 n)^i/2$ paths $P$ with $V(P) \subseteq V' \setminus x$ and $|P| = i$ such that $v_1v_2 P x$ is a properly coloured path and the edge (in $P$) incident with $x$ is of colour $c_i(x)$.
Note that no vertex is strongly $0$-far.

For integers $i \ge 0 $, let $X_i$ be the set of vertices in $V'$ that are $i$-far but not strongly $i$-far.
Also, let $Y_i$ be the set of vertices in $V'$ that are strongly $i$-far.
Note that $Y_0 = \emptyset$.
Let $N' = \{ w \in V' : c(w v'_1) \ne c(v_1' v_2') \}$.
If $y \in N' \cap Y_i$, then there exist at least $(\eps^2 n )^i/2 $ paths $P$
such that $V(P) \subseteq V' \setminus y$, $|P| = i$ and moreover $v_1v_2P y v'_1 v'_2$ is a properly coloured path.
Hence, if $|N' \cap Y_i | \ge 2 \eps^2 n$ for some $i \le  2 \eps^{-2}-1$, then the lemma holds by setting $i_0 = i+1$.
Recall that $\Dmon(K_n^c) = \Delta$, so $|N'| \ge n' - \Delta$.
Therefore, to prove the lemma, it is enough to show that $|Y_i| \ge \Delta + 2 \eps^2 n$ for some integer $1 \le i \le  2 \eps^{-2}-1$.

Recall that if $x \in X_i$, then there is a unique colour $c_i(x)$ such that $x$ is no longer $i$-far after removing all edges $xy$ with $c(xy) = c_i(x)$.
For each integer $0 \le i \le  2 \eps^{-2}-1$, define an auxiliary bipartite graph $H_i$ with vertex classes $X_i \cup Y_i$ and $V'$ and edge set $E(H_i)$ such that
\begin{itemize}
	\item[(a)] every vertex $y$ in $Y_i$ is adjacent to every vertex in $V' \setminus y$, and 
	\item[(b)] for $x \in X_i$ and $v \in V' \setminus x$, $xv$ is an edge in $H_i$ if and only if $c(xv) \ne c_i(x)$.
\end{itemize}
Since $\Dmon(K_n^c) = \Delta \le (1/2 - \eps)n$, each vertex $x \in X_i$ has degree at least $n' - 1 - \Delta \ge (2 + 3\eps)n/4$ in~$H_i$.
Thus, 
\begin{align}
	e(H_i) & \ge (2 + 3\eps) n |X_i| / 4 + (n'-1) |Y_i| . \label{eqn:eH_ilower}
\end{align}
Since $Y_0 = \emptyset$ and $X_0$ is the set of vertices $x \in V'$ such that $c(v_2x) \ne c(v_1v_2)$, we have $|X_0| \ge n' - \Delta \ge n/2$.
Thus, 
\begin{align}
e(H_0) \ge (2+ 3\eps)|X_0| n/4 \ge nn'/4.	\label{eqn:H'_0}
\end{align}
Suppose that $xv$ is an edge in $H_i$ with $x \in X_i$ and $v \in V'$.
Note that $v$ is in at most $i n^{i-1} \le (\eps^2 n )^{i}/4$ paths $P$ with $|P| = i$.
Since $x$ is $i$-far and $c(xv) \ne c_i(x)$, there exist at least $(\eps^2 n )^i /4$ paths $P$ such that $V(P) \subseteq V' \setminus v$, $|P| = i$ and $v_1v_2P x v$ is a properly coloured path.
A similar statement also holds for edges $yv$ in $H_i$ with $y \in Y_i$ and $v \in V'$.
Therefore, if a vertex $v \in V'$ has degree at least $4 \eps^2 n$ in $H_i$, then $v$ is $(i+1)$-far.
Similarly, we conclude that if $v$ has degree at least $\Delta + 4 \eps^2 n$ in $H_i$, then $v$ is strongly $(i+1)$-far. 
By counting the degrees of $v \in V'$ in $H_i$, we deduce that 
\begin{align}
	e(H_{i}) & \le 4 \eps^2 n |V' \setminus (X_{i+1} \cup Y_{i+1})| + ( \Delta + 4 \eps^2 n ) |X_{i+1}| + (|X_{i}| + |Y_{i}|) |Y_{i+1}| \nonumber \\
	& \le 4 \eps^2  n' n + \Delta  |X_{i+1}|+ (n'-1) |Y_{i+1}| \label{eqn:eH_iupper}\\
	& \le e(H_{i+1}) - \eps n ( 7 |X_{i+1}|/4 - 4 \eps n'), \nonumber
\end{align}
where the last inequality is due to~\eqref{eqn:eH_ilower}.
Thus, 
\begin{align}
\text{if $|X_{i+1}| \ge 20 \eps n'/7$, then $e(H_{i+1}) \ge e(H_i) + (\eps n')^2$.}  \label{eqn:H_iupper3}
\end{align}
Since $e(H_i)$ is at most $n'^2$, there exists an integer $i' \le \eps^{-2}$ such that $|X_{i'+1}| < 20 \eps n'/7$.
Let $i'$ be the smallest integer such that $|X_{i'+1}| < 20 \eps n'/7$.
Hence, $e(H_{i}) \ge e(H_0)  \ge n n'/4$ by \eqref{eqn:H_iupper3} and~\eqref{eqn:H'_0}.
By~\eqref{eqn:eH_iupper},
\begin{align*}
	n n'/4 &  \le e(H_{i'})  \le 4 \eps^2  n' n + \Delta  |X_{i'+1}| + n'  |Y_{i'+1} | \\
	&  \le 2 \eps n' n + n' |Y_{i'+1}|	.
\end{align*}
Hence, $|Y_{i'+1}|  \ge (1/4 - 2 \eps) n$.
Therefore, in $H_{i'+1}$, each vertex in $V'$ has degree at least $ |Y_{i'+1}| -1 \ge (1/4 - 2 \eps) n-1 \ge 4 \eps^2 n$.
This implies that $X_{i'+2}  \cup Y_{i'+2} = V'$ and so 
\begin{align}
e(H_{i'+2}) \ge (2+ 3\eps) n' n/4 \label{eqn:Hi'}
\end{align}
by~\eqref{eqn:eH_ilower}.
Let $i''$ be the smallest integer $i \ge i'+2$ such that $|X_{i+1}| < 20 \eps n'/7$.
Since $e(H_i) \le n'^2$, $i''$ exists by \eqref{eqn:H_iupper3}.
Moreover, $i'' \le i'+2  + 1/(2\eps^2) < 2\eps^{-2}-3$ as $e(H_{i'+2})  \ge (n')^2/2$.
Note that $e(H_{i''}) \ge e(H_{i'+2}) \ge (2+ 3\eps) n' n/4$ by~\eqref{eqn:Hi'}.
By~\eqref{eqn:eH_iupper}, we have
\begin{align*}
	(2+ 3\eps) n' n/4 & \le e(H_{i''})  \le 4 \eps^2  n' n + \Delta  |X_{i''+1}|  + (n'-1) |Y_{i''+1}|  \\
	 & \le  ( 4 \eps^2 + 10\eps/7   )  n' n + n'|Y_{i''+1}|, \\
	|Y_{i''+1}| & \ge ( 1/2 -  \eps + 2 \eps^2) n \ge \Delta + 2 \eps^2 n.
\end{align*}
This completes the proof of the lemma.
\end{proof}

We are ready to prove Lemma~\ref{lma:abscycle}.

\begin{proof}[Proof of Lemma~$\ref{lma:abscycle}$]
Since $\eps^{4 \eps^{-2}+2}$ is an increasing function of $\eps$ (as $0<\eps <1/2$), it suffices to prove the lemma for $\eps < 1/100$.
Let $n_0$ be a sufficiently large integer. 
Let $K_n^c$ be an edge-coloured complete graph with $\Dmon(K_n^c) \le (1/2 - \eps) n$ with $n \ge n_0$.
Set $\gamma =  \eps^{2 \eps^{-2}+2}$.
Let $\mathcal{F}'$ be the set of properly coloured paths obtained by Lemma~\ref{lma:absorbing}.
Therefore,
\begin{align}
|\mathcal{F}'| & \le  2^{-7} \gamma^2 n =  2^{-7} \eps^{4 \eps^{-2}+4} n,  \label{eqn:F'1}\\
|\mathcal{L}(x_1, x_2; y_1, y_2) \cap \mathcal{F}'| & \ge 1 \nonumber
\end{align}
for all distinct vertices $x_1,x_2, y_1,y_2 \in V(K_n^c)$.

We join the paths in $\mathcal{F}'$ into a properly coloured cycle $C$ as follows.
Let $P_1, \dots, P_{|\mathcal{F}'|}$ be the properly coloured paths in $\mathcal{F}'$.
For each $1 \le j \le |\mathcal{F}'|$, we are going to find a path $Q_j$ with $|Q_j| \le 2 \eps^{-2}$ and $V(Q_j) \subseteq V(K_n^c) \setminus V( \bigcup \mathcal{F}')$ such that $P_j Q_j P_{j+1}$ is a properly coloured path, where we take $P_{|\mathcal{F}'|+1} = P_1$, and such that $V(Q_j) \cap V(Q_{j'}) = \emptyset$ for all $j \ne j'$.
Assume that we have already constructed $Q_1, \dots,Q_{j-1}$.
Let $P_j = v_1 v_2 v_3 v_{4}$ and $P_{j+1} = v'_1 v'_2 v'_3 v'_{4}$.
By Lemma~\ref{lma:ifar} (with $v_1 = v_{3}$ and $v_2 = v_{4}$), there exists an integer $2 \le i_0 \le 2 \eps^{-2} $ such that there are at least $(\eps^2 n)^{i_0}$ paths $Q$ with $|Q| = i_0$ such that $ v_{3} v_{4} Q v'_1 v'_2$ is a properly coloured path. 
Set $ \bigcup \mathcal{F}' = \bigcup_{F \in \mathcal{F}'} F $ and $W_j = V ( \bigcup \mathcal{F}') \cup \bigcup_{j' < j} V(Q_{j'})$, so  by~\eqref{eqn:F'1}
\begin{align*}
|W_j| & = | V (\bigcup \mathcal{F}')| + \sum_{j' < j} |Q_{j'}|
\le  4 |\mathcal{F}'|  + 2\eps^{-2}  (j-1) \\
& <  (4+2 \eps^{-2} )|\mathcal{F}'| \le 2^{-5} \eps^{4 \eps^{-2}+2} n.
\end{align*}
Moreover, $W_j$ intersects with at most 
\begin{align*}
|W_j| \times i_0 n^{i_0-1} 
< 2^{-5} \eps^{4 \eps^{-2}+2} n \times 2 \eps^{-2} n^{i_0-1}
= 2^{-4 }\eps^{4 \eps^{-2}}  n^{i_0}
< (\eps^2 n)^{i_0}
\end{align*}
 paths of order~$i_0$ as $i_0 \le 2\eps^{-2}$. 
Therefore, there exists a path $Q_j$ with $V(Q_j) \subseteq V(K_n^c) \setminus W_j$ such that $ v_{3}v_{4} Q_j v'_1v'_2$ is a properly coloured path, which implies that $P_jQ_jP_{j+1}$ is a properly coloured path.
Hence, we find properly coloured paths $Q_1, \dots, Q_{|\mathcal{F}'|}$ as desired.
This means that $K_n^c$ contains a properly coloured cycle $C$ obtained by concatenating $P_1, Q_1, P_2, Q_2, \dots , Q_{|\mathcal{F}'|}$.
Note that $|C| \le ( 4+ 2 \eps^{-2} )  | \mathcal{F}' | \le 2^{-5} \eps^{4 \eps^{-2}+2} n $ by~\eqref{eqn:F'1}.

We now show that $C$ has the desired `absorbing' property.
Let $P = x_1x_2  \dots x_{\ell}$ be a properly coloured path with $\ell \ge 4$ and $V(P) \cap V(C) = \emptyset$.
Pick $P' = z_1z_2z_3z_4 \in \mathcal{L}(x_1, x_2; x_{\ell-1}, x_{\ell}) \cap \mathcal{F}'$.
Since $P'$ is an absorbing path for $(x_1, x_2; x_{\ell-1}, x_{\ell})$, Proposition~\ref{prp:abspath} implies that $z_1z_2 P z_3z_4$ is a properly coloured path.
Note that the endedges are the same as in $P'$.
Therefore, there exists a properly coloured cycle $C'$ with $V(C') = V(C) \cup V(P)$.
This completes the proof of Lemma~\ref{lma:abscycle}.
\end{proof}


\section{Properly coloured $2$-factors} \label{sec:2fact}

In this section, we prove the following lemma, which finds a properly coloured $2$-factor in $K_n^c$ with $\Dmon (K_n^c) < (1/2 - \eps) n $.

\begin{lma} \label{lma:2factor}
Let $0 < \eps < 1/2$.
Then there exists an integer $n_1 = n_1(\eps)$ such that every $K_n^c$ with $n \ge n_1$ and $\Dmon(K_n^c) \le (1/2 - \eps)n $ contains a properly coloured $2$-factor. 
\end{lma}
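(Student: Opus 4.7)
A properly coloured $2$-factor of $K_n^c$ is the same thing as a derangement $\pi$ of $V(K_n^c)$ with $c(v\pi(v)) \ne c(\pi^{-1}(v)v)$ for every $v$. I will encode $\pi$ as a perfect matching in an auxiliary bipartite graph $B$: the parts are disjoint copies $V^+$ and $V^-$ of $V(K_n^c)$, and there is an edge $v^+u^-$ (of colour $c(uv)$) for every $u \ne v$. A perfect matching $M$ of $B$ yields an oriented $2$-factor via $\pi(v) = u \iff v^+u^- \in M$; call $v$ \emph{bad for $M$} if the two $M$-edges at $v^+$ and $v^-$ share a colour, and write $b(M)$ for the number of bad vertices. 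The goal is a matching $M$ with $b(M) = 0$.

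Since $B$ is bipartite and $(n-1)$-regular, perfect matchings abound, so I can take $M^*$ minimising $b$; the plan is to show $b(M^*) = 0$ by a swap argument. If $v_0$ is bad with both $M^*$-edges of colour $\alpha$, I look for a short alternating cycle through $v_0^+$ (of length $4$ or $6$) in $B$ whose symmetric difference with $M^*$ yields $M'$ with $b(M') < b(M^*)$. By $\Dmon \le (1/2-\eps)n$, at $v_0$ there are at least $(1/2+\eps)n - O(1)$ candidate new partners $x$ with $c(v_0 x) \ne \alpha$, while any single ``forbidden colour'' condition at a vertex touched by the swap eliminates at most $(1/2-\eps)n$ candidates. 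A $4$-cycle swap will usually work; when the na\"ive union bound is too tight, I will either extend to $6$-cycle swaps (which multiply the candidate count by $\Theta(n)$) or allow the swap to turn at most one good vertex bad, provided it simultaneously repairs a different bad vertex, so that $b$ still strictly decreases.

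The main obstacle is precisely this counting in the swap step: the ``do-not-create-a-new-bad-vertex'' conditions are not all of the clean ``avoid a fixed colour at a fixed vertex'' form, because the forbidden colour at the affected partner $x$ depends on the $M^*$-edge currently at $x^+$, which varies with $x$. I plan to handle this via a double-counting across all candidate swaps, using the facts that (i) the total matching-colour distribution over $B$ is constrained, and (ii) the $\eps n$ slack in $\Dmon$ at every vertex gives room, on average, to dodge the cumulative forbidden set. Parity of $n$ is not an issue, since the bipartite encoding treats even and odd $n$ uniformly.
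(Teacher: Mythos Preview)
Your bipartite encoding is sound: a perfect matching in $B$ with $b(M)=0$ does give a properly coloured $2$-factor (the colour condition automatically excludes transpositions in $\pi$, so all cycles have length $\ge 3$).

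The swap argument, however, has a genuine gap at exactly the point you flag. In a $4$-cycle swap through $v_0^+u_0^-$ and $x^+w^-$ (with $x=\pi^{-1}(w)$), two of the four ``no new bad vertex'' constraints are of the clean fixed-colour form and cost at most $\Dmon$ each, but the other two --- $c(v_0w)\ne c(w\,\pi(w))$ and $c(xu_0)\ne c(\pi^{-1}(x)\,x)$ --- have the forbidden colour varying with $w$. The set $B_2=\{w:c(v_0w)=c(w\,\pi(w))\}$ can have size as large as $\Dmon$: if every matching edge $w\,\pi(w)$ carries a single colour $\gamma$ (which is compatible with $\Dmon\le(1/2-\eps)n$ whenever the colour-$\gamma$ subgraph has a perfect matching), then $|B_2|$ equals the $\gamma$-degree of $v_0$, which may be $(1/2-\eps)n$. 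So the four constraints together can kill roughly $4\Dmon\approx 2n$ candidates out of $n$. Passing to $6$-cycles does not help, because these two problematic constraints depend only on the choice of $w$ (respectively $y$) and therefore still eliminate a $|B_2|/n$ fraction of all pairs. The double-count you propose, summing $\mathbb{1}[c(vw)=c(w\,\pi(w))]$ over all $v$, shows only that the \emph{average} of $|B_2(v)|$ over all $v$ is at most $\Dmon$; it gives no control at the single bad vertex $v_0$, and when $b(M^*)=1$ you cannot average. Allowing one new bad vertex in exchange for repairing two old ones does not rescue this either, since a single swap touches only four vertices and nothing forces two of them to be bad in $M^*$.

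The paper's route is quite different. It works with properly coloured $1$-path-cycles rather than bipartite matchings, takes one of maximum order, and performs \emph{iterated} rotations at the endpoints of its path component --- essentially your ``neutral'' swaps that move the defect rather than remove it, applied to unbounded depth. The key lemma shows that the set $Z_\ell$ of (endpoint, end-colour) pairs reachable in $\ell$ right rotations satisfies $|Z_{\ell+1}|\ge(1+\eps)|Z_\ell|$; after $O(\log_{1+\eps}2)$ steps some vertex $z$ is reachable with two distinct end-colours. Doing the same on the left and combining the two rotation sequences yields endpoints $w,z$ with $c(wz)$ avoiding both end-colours, so adding $wz$ closes the $1$-path-cycle into vertex-disjoint properly coloured cycles. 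This exponential growth over many steps is precisely what a bounded-length alternating-cycle swap cannot provide.
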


Before proving the lemma, we need the following notation.
Let $C$ be a directed cycle.
For a vertex $v \in V(C)$, let $v_+$ and $v_-$ be the successor and ancestor of $v$ in $C$ respectively. 
Further, let $c_-(v)$ and $c_+(v)$ be the colours $c(v v_-)$ and $c(v v_+)$ respectively.
For distinct vertices $u, v \in V(C)$, define $v C^+ u$ to be the path $v v_+ \dots u_- u$ on~$C$, and similarly define $v C^- u$ to be the path $v v_- \dots u_+ u$ on~$C$.

Given an edge-coloured graph $G$, we denote by $\mathcal{C}_G(v)$ the set of colours incident at $v$ in~$G$.
Equivalently, $\mathcal{C}_G(v) = \{ c(vu) : u \in N_G(v)\}$.
Given $x,y \in V(G)$, the \emph{distance $\dist_G(x,y)$ in $G$ between $x$ and $y$} is the minimum integer $\ell$ such that $G$ contains a path (not necessarily properly coloured) of length $\ell$ from $x$ to $y$. 
Note that $\dist_G (x,x) = 0$ for all $x \in V(G)$.
If $x$ and $y$ are not connected in~$G$, then we say $\dist_G (x,y) = \infty$.

Recall that a graph $G$ is said to be a $1$-path-cycle if $G$ is a vertex-disjoint union of at most one path $P$ and a number of cycles. 
We say that $G$ is a \emph{$1$-path-cycle with parameters $(x,c_x ; y, c_y)$} if $G$ satisfies the following three properties:
\begin{itemize}
	\item[(a)] $G$ is a properly coloured $1$-path-cycle;
	\item[(b)] the path $P = v_1 \dots v_{\ell}$ in $G$ has length at least 1 with $v_1 = x$ and $v_{\ell} = y$;
	\item[(c)] $c_x = c(v_1 v_2)$ and $c_y = c(v_{\ell} v_{\ell-1})$.
\end{itemize}
Note that $x$ and $y$ are the endvertices of $P$.
Also, $c_x$ and $c_y$ are precisely the colours of the edges in $P$ (and $G$) incident to $x$ and $y$ respectively.
The ordering of $(x,c_x ; y, c_y)$ is important.
Recall that all paths are assumed to be directed.
So `a $1$-path-cycle with parameters $(x,c_x ; y, c_y)$' is considered to be different from `a $1$-path-cycle with parameters $(y, c_y ; x,c_x)$', even though the underlying graphs maybe the same.
Let $G$ be a $1$-path-cycle with parameters $(x,c_x ; y, c_y)$ in $K_n^c$.
For a vertex $v \in V(G) \setminus x$, the edge $xv$ is a \emph{left chord for $G$} if $c(xv) \ne c_x$.
Similarly, the edge $yv$ is a \emph{right chord for $G$} if $v \in V(G) \setminus y $ and $c(yv) \ne c_y$.
A \emph{chord} is a left or right chord.

Now we sketch the proof of Lemma~\ref{lma:2factor}.
Suppose that $G$ is a properly coloured $1$-path-cycle in $K_n^c$ with $|G|$ maximal.
Further assume that $G$ has parameters $(x,c_x;y,c_y)$.
By chord rotations (defined later), we find a properly coloured $1$-path-cycle $G_0$ with parameters $(x',c_{x'};y',c_{y'})$ such that $c_{x'} \ne c(x'y') \ne c_{y'}$ and $V(G_0) = V(G)$.
So $x'y'$ is both a left and right chord for $G_{0}$.
Hence, $P_0 +x'y'$ is a properly coloured cycle, where $P_0$ is the path in $G_{0}$.
This implies that $G_{0} + x'y'$ is a set of properly coloured vertex-disjoint cycles. 
If $V(K_n^c) = V(G) = V(G_0)$, then $G_{0} + x'y'$ is a properly coloured $2$-factor .
If $V(K_n^c) \ne V(G)$, then $G_{0} + x'y'$ together with a vertex $z \in V(K_n^c) \setminus V(G)$ is a larger properly coloured $1$-path-cycle, contradicting the maximality of $|G|$.
This proves Lemma~\ref{lma:2factor}.

In the lemma below, we show why chords are useful.

\begin{lma} \label{rotation}
Let $G$ be a $1$-path-cycle with parameters $(x,c_x ; y, c_y)$.
Suppose that $yw$ is a right chord for $G$ with $w \notin \{ x \} \cup N_G(x) $.
Then there exists a properly coloured $1$-path-cycle $G'$ such that the following statements hold:
\begin{itemize}
\item[(i)] $G'$ is a spanning subgraph of $G + y w$ containing the edge $yw$.
\item[(ii)] $G'$ has parameters $(x,c_x ; y', c_{y'})$ such that $y' \in N_G(w)$,  $N_G(y') = \{ w, w' \}$ and $c_{y'}  = c(y'w') \in \mathcal{C}_G(y')$.
\item[(iii)] Let $N_G(w) = \{z_1,z_2\}$. 
Then $G'$ has parameters $(x,c_x; z_1, c' )$ only if $c(yw) \ne c(wz_2)$.
\item[(iv)] For $v \in V(G)$, if $\dist_G(v,z) \ge 2$ for all $z \in \{x,y,w\}$, then $ N_G(v) = N_{G'}(v)$.
\end{itemize}
Moreover, similar statements hold if $wx$ is a left chord with $w \notin \{ y \} \cup N_G(y)$.
\end{lma}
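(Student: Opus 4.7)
The plan is to perform a single chord rotation: add the edge $yw$ to $G$ (which pushes $w$ to degree $3$) and then delete exactly one of the two original edges $wz_1,wz_2$ at $w$. Writing $N_G(w)=\{z_1,z_2\}$, the hypothesis that $G$ is properly coloured forces $c(wz_1)\ne c(wz_2)$, so at most one of these two colours can agree with $c(yw)$. I pick $i\in\{1,2\}$ so that $c(yw)\ne c(wz_{3-i})$ and set
\[
G' := G + yw - wz_i,
\]
aiming to show that $G'$ is a $1$-path-cycle with parameters $(x,c_x;y',c_{y'})$ where $y'=z_i$ and $c_{y'}$ is the colour of the remaining edge at $z_i$.

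The only vertices whose set of incident edges changes between $G$ and $G'$ are $w$, $y$ and $z_i$. In $G'$, at $w$ the two incident colours are $c(yw)$ and $c(wz_{3-i})$, distinct by the choice of $i$; at $y$ they are $c_y$ and $c(yw)$, distinct because $yw$ is a right chord; and $z_i$ becomes an endpoint with a single incident edge. So $G'$ is properly coloured. For the underlying $1$-path-cycle structure I split on where $w$ sits. If $w$ lies on a cycle $C$ of $G$, deleting $wz_i$ opens $C$ into a path ending at $w$ and $z_i$, and the new edge $yw$ glues this path onto $P$ at $y$, producing a single path with endpoints $x$ and $z_i$. If instead $w=v_j$ lies on $P=v_1\cdots v_\ell$ (note that the hypotheses $w\notin\{x\}\cup N_G(x)$ and $c(yw)\ne c_y$ together exclude $w\in\{x,v_2,y,v_{\ell-1}\}$, so $2<j<\ell-1$), then the subcase $z_i=v_{j-1}$ leaves the path $v_1\cdots v_{j-1}$ together with a new cycle $v_jv_{j+1}\cdots v_\ell v_j$, while the subcase $z_i=v_{j+1}$ re-routes $P$ as $v_1\cdots v_{j-1}v_jv_\ell v_{\ell-1}\cdots v_{j+1}$. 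In every case $G'$ is a $1$-path-cycle whose path has endpoints $x$ and $z_i$, with the other cycles of $G$ untouched.

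I then read off (i)--(iv). Property (i) is immediate from the definition of $G'$. For (ii) I take $w'$ to be the $G$-neighbour of $z_i$ other than $w$, which exists and is unique because the case analysis above forces $z_i\notin\{x,y\}$; then $N_G(y')=\{w,w'\}$ and $c_{y'}=c(y'w')\in\mathcal{C}_G(y')$. Property (iii) is simply the contrapositive of the choice of $i$: if $y'=z_1$ then we must have removed $wz_1$, which required $c(yw)\ne c(wz_2)$. Property (iv) holds because the only vertices whose $G$-neighbourhood was altered are $y$, $w$ and $z_i$, and the hypothesis $\dist_G(v,z)\ge 2$ for $z\in\{x,y,w\}$ in particular yields $v\ne y,w$ and $v\notin N_G(w)=\{z_1,z_2\}$, so $v\ne z_i$. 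The left-chord statement follows verbatim after reversing the orientation of $P$. The one place I expect any delicacy is the case-checking: verifying that adding $yw$ and deleting $wz_i$ really produces an honest $1$-path-cycle (and not a theta graph or a disconnection that orphans the path), and that the degenerate positions $w\in\{v_2,v_{\ell-1}\}$ are genuinely excluded by the stated hypotheses, which is precisely what $w\notin\{x\}\cup N_G(x)$ (together with the chord condition at $y$) is designed to guarantee.
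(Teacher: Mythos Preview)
Your proof is correct and is essentially the same argument as the paper's, just organised differently: the paper splits upfront into the cases $w\in V(C)$ versus $w=v_j\in V(P)$ and in the latter into the two subcases $c(v_\ell v_j)\ne c(v_jv_{j-1})$ and $c(v_\ell v_j)\ne c(v_jv_{j+1})$, whereas you first define $G'=G+yw-wz_i$ uniformly (choosing $i$ so that $c(yw)\ne c(wz_{3-i})$) and only afterwards verify the $1$-path-cycle structure in each case. The content is identical; your packaging makes the verification of (i)--(iv) slightly cleaner since the edge set of $G'$ is fixed once and for all before the case split.
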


\begin{proof}
Let $yw$ be a right chord for $G$ and let $P= v_1 \dots v_{\ell}$ be the path in $G$, so $y = v_{\ell}$.
First suppose that $w \notin V(P)$, so $w \in V(C)$ for some properly coloured cycle $C$ in $G$.
Orient $C$ into a directed cycle so that $ c(yw) \ne c_-(w)$.
Observe that $P' = v_1 \dots v_{\ell} w C^- w_+$ is a properly coloured path.
Hence, $G' = G - C - P + P'$ is a properly coloured $1$-path-cycle containing $yw$ with parameters $(x,c_x ; w_+, c_+(w_+))$.
Hence (i) and (ii) hold for this case.
It is also easy to verify (iii) and~(iv).

Next, suppose that $w \in V(P)$ and so $w = v_j$ for some $3 \le  j \le \ell-2$.
Recall that $y = v_{\ell}$.
Note that $c(yw) = c(v_{\ell} v_j) \ne c(v_j v_{j-1})$ or $c(yw) = c(v_{\ell} v_j) \ne c(v_j v_{j+1})$.
If $c(v_{\ell} v_j) \ne c(v_j v_{j-1})$, then $P'' = v_1 \dots v_j v_{\ell} \dots v_{j+1}$ is a properly coloured path, see Figure~\ref{fig:C2}.
So $G ' = G - P + P''$ is a properly coloured $1$-path-cycle with parameters $(x,c_x ; v_{j+1}, c(v_{j+1} v_{j+2}))$.
If $c(v_{\ell} v_j) \ne c(v_j v_{j+1})$, then $C =  v_j \dots v_{\ell} v_{j}$ is a properly coloured cycle, see Figure~\ref{fig:C1}.
Hence, $G' = G - P + C + v_1\dots v_{j-1}$ is a properly coloured $1$-path-cycle with parameters $(x,c_x ; v_{j-1}, c(v_{j-1} v_{j-2}))$.
Hence (i)--(iv) follow.
\begin{figure}[tbp]
\begin{center}
\includegraphics[scale=0.6]{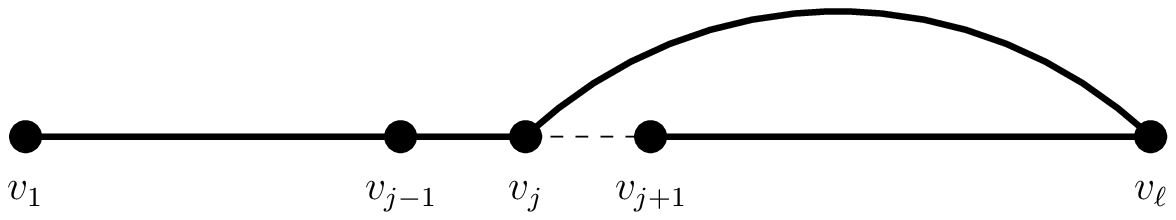}
\end{center}
\caption{$c(v_{\ell} v_j) \ne c(v_j v_{j-1})$}
\label{fig:C2}
\begin{center}
\includegraphics[scale=0.6]{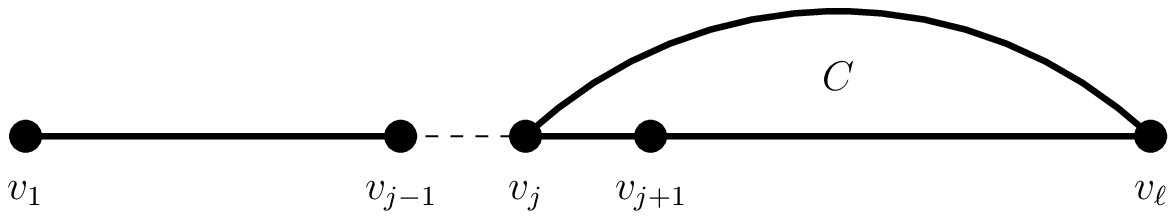}
\end{center}
\caption{$c(v_{\ell} v_j) \ne c(v_j v_{j+1})$}
\label{fig:C1}
\end{figure}
\end{proof}

Let $G$, $yw$ and $G'$ be as defined in Lemma~\ref{rotation}.
We say that $G'$ is obtained from $G$ by a \emph{chord rotation using the chord $yw$}, or a \emph{rotation using $yw$} for short.
Since this rotation changes the two parameters on the right and uses a right chord, we call this a \emph{right rotation}.
Similarly, we define a $\emph{left rotation}$ for a left chord~$xw$.

For the rest of this section, a chord $uw$ is either a left or right chord (but not both) unless stated otherwise.
Suppose that $G'$ is obtained from $G$ by a rotation using $uw$.
Since $|\{u,w\} \cap \{x,y\}|=1$, we can determine whether the chord (and rotation) is left or right by considering $\{u,w\}$.
Hence, we can write the chord $uw$ as an ordered pair $(u,w)$ with $u \in \{x,y\}$.
We simply write $uw$ for $(u,w)$ if the order is clear from the context.

Given a $1$-path-cycle $G$ with parameters $(x,c_x;y,c_y)$, we say that a $1$-path-cycle $G_{\ell}$ is obtained from $G$ by \emph{$\ell$ rotations using a chord sequence $e_1, \dots, e_{\ell}$} if there exist properly coloured $1$-path-cycles $G_1, \dots G_{\ell-1}$ such that for each $1 \le i \le \ell$ the following statements hold (by taking $G_0 = G$)
\begin{itemize}
\item[(a)] $e_i$ is a chord for $G_{i-1}$, and
\item[(b)] $G_i$ can be obtained from $G_{i-1}$ by a rotation using $e_i$.
\end{itemize}
The (chord) sequence $e_1, \dots, e_{\ell}$ with $e_i = ( u_i , w_i )$ is said to be \emph{spread out in~$G$} if the distance in~$G$ between any two elements in $\{x,y, w_i: 1 \le i \le \ell \}$ is greater than~$5$. 
Equivalently, $e_1, \dots, e_{\ell}$ is spread out in $G$ if $\dist_G(v,v') > 5$ for all distinct $v, v' \in \{x,y, w_i: 1 \le i \le \ell\}$.
The following corollary is proved by induction on $\ell$ together with Lemma~\ref{rotation}.

\begin{cor} \label{rotation2}
Let $G$ be a $1$-path-cycle with parameters $(x,c_x ; y, c_y)$.
Let $G_{\ell}$ be a $1$-path-cycle obtained from $G$ by $\ell$ rotations using a chord sequence $e_1, \dots, e_{\ell}$.
Suppose that the sequence $e_1, \dots, e_{\ell}$ is spread out in $G$ and $e_i = u_i w_i$ for all $1 \le i \le \ell$.
Then the following statements hold:
\begin{itemize}
	\item[(i)] $G_{\ell}$ has parameters $(x',c_{x'} ; y', c_{y'})$ with $V(G_{\ell}) = V(G)$, $x', y' \in \{x,y \} \cup \bigcup_{1 \le i \le \ell} N_G(w_i)$, $c_{x'} \in \mathcal{C}_G(x')$ and $c_{y'} \in \mathcal{C}_G(y')$.
	\item[(ii)] For $v \in V(G)$, if $\dist_G(v,u) > 5$ for all $u \in \{x,y, w_i : 1 \le i \le \ell\}$, then $v \notin N_{G_{\ell}}(x') \cup N_{G_{\ell}}(y')\cup \{x',y'\}$ and $ N_G(v) = N_{G_{\ell}}(v)$ 
	\item[(iii)] If $e_1, \dots, e_{\ell}$ are all right chords, then $x' = x$ and $y' \in N_G(w_{\ell})$.
	\item[(iv)] If $e_1, \dots, e_{\ell}$ are all left chords, then $x' \in N_G(w_{\ell})$ and $y' = y$.
\end{itemize}
\end{cor}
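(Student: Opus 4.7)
I would prove the corollary by induction on $\ell$, invoking Lemma~\ref{rotation} at each step. The base case $\ell = 0$ is immediate since $G_0 = G$. For the inductive step, assume the four statements hold for $G_{\ell-1}$ with parameters $(x'', c_{x''}; y'', c_{y''})$, and suppose $e_\ell = u_\ell w_\ell$ is a right chord (the left-chord case is symmetric), so $u_\ell = y''$ and I aim to apply Lemma~\ref{rotation} to $G_{\ell-1}$ with the chord $y'' w_\ell$.

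The key verification is the hypothesis $w_\ell \notin \{x''\} \cup N_{G_{\ell-1}}(x'')$. Induction part~(i) places $x''$ within $G$-distance~$1$ of some element $z$ of $S_{\ell-1} := \{x, y, w_1, \ldots, w_{\ell-1}\}$, and the unique $G_{\ell-1}$-neighbour of $x''$ has colour $c_{x''} \in \mathcal{C}_G(x'')$, so it is also a $G$-neighbour of $x''$ and hence within $G$-distance~$2$ of $z$. The spread-out hypothesis $\dist_G(w_\ell, z) > 5$ then easily rules out coincidence with either vertex. Applying Lemma~\ref{rotation} yields $G_\ell$ with parameters $(x'', c_{x''}; y', c_{y'})$ where $y' \in N_{G_{\ell-1}}(w_\ell)$; induction~(ii) at $w_\ell$ gives $N_{G_{\ell-1}}(w_\ell) = N_G(w_\ell)$, so $y' \in N_G(w_\ell)$, which handles~(i) and~(iii). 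An analogous argument with $y'$ in place of $w_\ell$ (using $\dist_G(y', z) > 4$ for $z \in S_{\ell-1}$) gives $\mathcal{C}_{G_{\ell-1}}(y') = \mathcal{C}_G(y')$ and thus $c_{y'} \in \mathcal{C}_G(y')$. Part~(iv) is symmetric.

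For part~(ii), fix $v$ with $\dist_G(v, u) > 5$ for every $u \in S_\ell := S_{\ell-1} \cup \{w_\ell\}$. Induction gives $N_G(v) = N_{G_{\ell-1}}(v)$, and since each of $x''$, $y''$, $w_\ell$ is within $G$-distance~$1$ of some special point, we have $\dist_G(v, z) > 4$ for $z \in \{x'', y'', w_\ell\}$. Transferring this to $G_{\ell-1}$ through the equality of $v$-neighbourhoods, Lemma~\ref{rotation}(iv) applies and yields $N_{G_\ell}(v) = N_{G_{\ell-1}}(v) = N_G(v)$. The same distance estimate rules out $v \in \{x', y'\} \cup N_{G_\ell}(x') \cup N_{G_\ell}(y')$, because any such membership would force $x'$ or $y'$ to be a $G$-neighbour of $v$, contradicting the distance bound.

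The main obstacle I anticipate is not conceptual but bookkeeping around the constant~$5$ in the definition of ``spread out'': each appeal to induction~(i) costs one unit of distance, because intermediate endpoints lie in $N_G(w_i)$ rather than at $w_i$, and checking non-adjacency of such an endpoint with $w_\ell$ requires also considering its $G_{\ell-1}$-neighbour, costing a second unit. The constant~$5$ seems to be calibrated precisely so that, even after both losses, the remaining margin exceeds~$1$ and the non-adjacency required by Lemma~\ref{rotation} is guaranteed.
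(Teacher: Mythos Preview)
Your proof is correct and follows essentially the same induction-on-$\ell$ strategy as the paper, invoking Lemma~\ref{rotation} at each step and using the spread-out hypothesis to control distances. One minor remark: the hypothesis $w_\ell \notin \{x''\} \cup N_{G_{\ell-1}}(x'')$ that you work to verify is already built into the definition of a rotation (see the sentence following Lemma~\ref{rotation}), so that verification is unnecessary---and incidentally, your inference there from $c_{x''} \in \mathcal{C}_G(x'')$ to the $G_{\ell-1}$-neighbour of $x''$ being a $G$-neighbour is not quite valid, since that edge could be one of the added chords $e_j$ rather than an edge of~$G$.
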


\begin{proof}
We proceed by induction on $\ell$.
The corollary is trivially true for $\ell =0$ and so we may assume that $\ell  \ge 1$.
Let $G_{\ell - 1}$ be the $1$-path-cycle with parameters $(x'',c_{x''} ; y'', c_{y''})$ obtained from $G$ by $\ell -1$ rotations using the chord sequence $e_1, \dots, e_{\ell-1}$.
Moreover, $G_{\ell}$ can be obtained from $G_{\ell-1}$ by a rotation using the chord $e_{\ell}$.
By the induction hypothesis, we have $x'', y'' \in \{x,y \} \cup \bigcup_{1 \le i \le \ell-1} N_G(w_i)$, $c_{x''} \in \mathcal{C}_G(x'')$ and $c_{y''} \in \mathcal{C}_G(y'')$.
Since $G_{\ell}$ can be obtained from  $G_{\ell-1}$ by using $e_{\ell}$, Lemma~\ref{rotation}(ii) implies (i) holds.
Similar arguments show that both (iii) and (iv) hold.

Let $v \in V(G)$ with $\dist_G(v,u) > 5$ for all $u \in \{x,y, w_i : 1 \le i \le \ell\}$.
Note that $N_G(v) = N_{G_{\ell-1}}(v)$ by the induction hypothesis.
Since $e_{\ell} = u_{\ell} w_{\ell}$ is a chord for $G_{\ell-1}$, $u_{\ell} \in \{x'', y''\} \subseteq \{x,y \} \cup \bigcup_{1 \le i \le \ell-1} N_G(w_i)$ by~(i).
Hence, $\dist_G(v,u) \ge 5$ for all $u \in \{x'',y'', w_{\ell}\}$.
By (i) and Lemma~\ref{rotation}(iv), we have $v \notin N_{G_{\ell}}(x') \cup N_{G_{\ell}}(y')\cup \{x',y'\}$ and $N_{G_{\ell}}(v) = N_{G_{\ell-1}}(v) = N_G(v)$.
Hence (ii) holds.
\end{proof}

The next lemma shows how to combine two chord sequences.

\begin{lma} \label{lma:rotation4}
Let $G$ be a properly coloured $1$-path-cycle with parameters $(x,c_x ; y, c_y)$.
Suppose that $G_R$ is a $1$-path-cycle with parameters $(x,c_x;z,c_z)$ obtained from $G$ by $\ell$ right rotations using a chord sequence $e_1, \dots,e_{\ell}$.
Suppose that $G_L$ is a $1$-path-cycle with parameters $(w,c_w;y,c_y)$ obtained from $G$ by $\ell'$ left rotations using a chord sequence $f_1, \dots,f_{\ell'}$.
Further suppose that the sequence $e_1, \dots,e_{\ell}, f_1, \dots,f_{\ell'}$ is spread out in $G$.
Then there exists a $1$-path-cycle $G_0$ with parameters $(w,c_w;z,c_z)$ obtained from $G$ by rotations using $e^1_1, \dots,e^1_{\ell}$, $f^1_1, \dots,f^1_{\ell'}$.
Moreover, $V(G_0) = V(G)$.
\end{lma}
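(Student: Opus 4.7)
The plan is to construct $G_0$ by applying the two chord sequences to $G$ in series: first perform the $\ell$ right rotations $e_1,\dots,e_\ell$ on $G$ (which is legal by hypothesis and yields $H_\ell := G_R$ with parameters $(x,c_x;z,c_z)$), and then perform, in order, the $\ell'$ left rotations using $f_1,\dots,f_{\ell'}$ on $H_\ell,\dots,H_{\ell+\ell'-1}$, each time reusing the same underlying pair $(u_j,w_j')$ as in the original derivation of $G_L$ but with $u_j$ reinterpreted as the current left endpoint of the graph at that step.

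The heart of the argument will be an induction on $j$ showing that, before step $\ell+j$, the chord $f_j$ is a valid left chord of $H_{\ell+j-1}$ and that the corresponding rotation leaves the right endpoint and its incident colour untouched while reproducing the left-end change from the $G_L$ derivation. I would maintain two invariants: first, after the right phase and the $j-1$ already-performed left rotations, the right endpoint $z$ together with its unique path-neighbour still lies within $G$-distance~$2$ of some $w_i$ from the right sequence; second, the current left endpoint and its incident colour agree with the $(j-1)$-th intermediate graph in the $G_L$ derivation, and $N_{H_{\ell+j-1}}(w_j')=N_G(w_j')$. Both invariants will follow from Corollary~\ref{rotation2}(ii), applied to the right phase and, inductively, to the partial left phase. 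The spread-out hypothesis ($\dist_G$ greater than $5$ between all distinguished vertices) then yields $w_j'\notin\{z\}\cup N_{H_{\ell+j-1}}(z)$, supplying the hypothesis $w\notin\{y\}\cup N(y)$ demanded by Lemma~\ref{rotation}; the colour requirement $c(u_jw_j')\ne c_{u_j}$ that makes $f_j$ a chord is inherited directly from the $G_L$ derivation, since the invariant preserves $u_j$ and $c_{u_j}$. That the rotation reshapes $H_{\ell+j-1}$ in exactly the same local manner as in the $G_L$ derivation then follows from Lemma~\ref{rotation}(iv), because the local structure around $w_j'$ is unchanged.

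Once this induction is complete, the full sequence $e_1,\dots,e_\ell,f_1,\dots,f_{\ell'}$ has been realised on $G$ and is spread out by hypothesis, so Corollary~\ref{rotation2}(i) gives $V(G_0)=V(G)$. The right parameters $(z,c_z)$ survive the left phase by Corollary~\ref{rotation2}(iv), while the left parameters evolve identically to the $G_L$ derivation and so terminate at $(w,c_w)$. Hence $G_0$ has parameters $(w,c_w;z,c_z)$ as required.

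The main obstacle is the double bookkeeping in the inductive step: one must argue simultaneously that the right phase does not disturb any local structure the later left rotations rely on, and that each left rotation leaves the right end alone. The threshold $5$ in the definition of \emph{spread out} is precisely calibrated for this, since a single rotation using a chord $uw$ only alters the graph within $G$-distance~$2$ of $\{u,w\}$, so chords whose vertex sets are pairwise at $G$-distance greater than~$5$ operate on genuinely disjoint portions of the structure.
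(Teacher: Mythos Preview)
Your approach is essentially the same as the paper's: first perform the $\ell$ right rotations to reach $G_R$, then carry out the left rotations $f_1,\dots,f_{\ell'}$ one at a time, checking inductively that each $f_j$ is still a valid left chord and produces the same new left endpoint (and colour) as in the $G_L$ derivation, while the right end $(z,c_z)$ stays fixed. The paper packages this as induction on~$\ell'$ rather than on~$j$, but the unrolled argument is identical.

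One point to tighten: the crucial step is showing that the rotation by $f_j$ on $H_{\ell+j-1}$ yields the \emph{same} new left endpoint as in the $G_L$ derivation. You attribute this to Lemma~\ref{rotation}(iv), but (iv) only says that neighbourhoods of vertices far from the action are preserved; it does not determine which of the two neighbours of $w_j'$ becomes the new endpoint. The paper handles this by first extracting, from the $G_L$ derivation, the colour inequality $c(u_jw_j')\ne c(w_j'w')$ (where $N_G(w_j')=\{w,w'\}$ and $w$ is the intended new endpoint) via Lemma~\ref{rotation}(iii), and then invoking (iii) again in the context of $H_{\ell+j-1}$, where the same inequality holds because $N_{H_{\ell+j-1}}(w_j')=N_G(w_j')$ and the left endpoint and its colour match the $G_L$ derivation by the invariant. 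So replace your appeal to (iv) here by (ii) and (iii). Similarly, your final appeal to Corollary~\ref{rotation2}(iv) for preserving $(z,c_z)$ through the left phase is not quite licit as stated (the corollary's hypotheses refer to spread-out-ness in the starting graph, and the full sequence is neither all left nor all right); this is exactly what your step-by-step induction already establishes, so no separate citation is needed.
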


\begin{proof}
We fix $\ell$ and proceed by induction on $\ell'$.
The statement is trivially true for $\ell' =0$ so we may assume that $\ell' >0$.
Let $f_i = u_iw_i$ for $1 \le i \le \ell'$.
By Corollary~\ref{rotation2}(i) (with $G_{\ell} = G_L$), we know that $w \in  N_G(w_{\ell'})$.
Since $w_{\ell'} \notin \{x,y\}$ and $G$ is a $1$-path-cycle, we have $d_G(w_{\ell'}) = 2$.
Let 
\begin{align}
N_G(w_{\ell'}) = \{w, w'\}. \label{eqn:w'}
\end{align}
There exists a $1$-path-cycle $G'_L$ with parameters $(u_{\ell'},c_{u_{\ell'}};y, c_y)$ obtained from $G$ by $\ell'-1$ rotations using the chord sequence $f_1, \dots,f_{\ell'-1}$.
Moreover, $G_L$ can be obtained from $G'_L$ by a rotation using $f_{\ell'}$.
Since $f_1, \dots,f_{\ell'}$ is spread out in~$G$, $\dist_G(w_{\ell'} , v) > 5$ for all $v \in \{ x, y ,w_i : 1 \le i < \ell'\}$.
Hence, Corollary~\ref{rotation2}(ii) and \eqref{eqn:w'}  imply that 
\begin{align*}
N_{G'_L}(w_{\ell'}) = N_G(w_{\ell'}) = \{w, w'\}.
\end{align*}
Furthermore, since $G_L$ can be obtained from $G'_L$ by a left rotation using the chord $f_{\ell'} = u_{\ell'}w_{\ell'}$, Lemma~\ref{rotation}(iii) (with $G = G'_L$, $G' = G_L$ and left chord $xw = f_{\ell'}$) implies that
\begin{align}
c(uw_{\ell'}) \ne c(w_{\ell'}w'). \label{c(uw')}
\end{align}

On the other hand, recall that $G'_L$ has parameters $(u_{\ell'},c_{u_{\ell'}};y, c_y)$.
By the induction hypothesis, there exists a $1$-path-cycle $G'$ with parameters $(u_{\ell'},c_{u_{\ell'}} ; z, c_{z})$ obtained from $G$ by rotations using $e_1, \dots, e_{\ell} , f_1, \dots, f_{\ell'-1}$.
Let $e_i = u'_iw'_i$ for $1 \le i \le \ell$.
Since $e_1, \dots, e_{\ell} , f_1, \dots, f_{\ell'}$ is spread out in~$G$,
we have $w_{\ell'} \notin N_{G'}(u_{\ell'}) \cup N_{G'}(z)\cup \{u_{\ell'},z\}$ and $N_{G'}(w_{\ell'}) = N_G(w_{\ell'}) = \{w,w'\}$ by Corollary~\ref{rotation2}(ii) and \eqref{eqn:w'}.
Hence $f_{\ell'}$ is also a left chord for $G'$.
By \eqref{c(uw')} and Lemma~\ref{rotation}(iii) (with $G = G'$ and left chord $xw = f_{\ell'}$), there exists a $1$-path-cycle $G_0$ with parameters $(w,c_{w} ; z, c_{z})$ obtained from $G'$ by a rotation using $f_{\ell'}$.
This completes the proof of the lemma.
\end{proof}

Let $G$ be a $1$-path-cycle in $K_n^c$ with $|G|$ maximal.
Further suppose that $G$ has parameters $(x,c_x ; y, c_y)$.
In the next lemma, we show that there exists a vertex $z \in V(G)$ and two distinct colours $c^1$ and $c^2$ such that for $i=1,2$ there exists $1$-path-cycle $G^i$ with parameters $(x,c_x;z,c^i)$ obtained from $G$ by right rotations only.

\begin{lma} \label{lma:rotation3}
Let $0 < \eps < 1/2$.
Then there exists an integer $n_0$ such that whenever $n \ge n_0$ the following holds.
Suppose that $K_n^c$ is an edge-coloured $K_n$ with $\Dmon(K_n^c) \le (1/2 - \eps)n $.
Let $G$ be a properly coloured $1$-path-cycle in $K_n^c$ with $|G|$ maximal.
Suppose that $G$ has parameters $(x,c_x ; y, c_y)$.
Let $U$ be a subset of $V(K_n^c) \setminus \{x,y\}$ of size at most $\eps n /8$.
Then there exist an integer $1 \le \ell \le \lceil 1 / \log_2(1+ \eps) \rceil + 1$ and a vertex $z \in V(G) \setminus U$ such that
\begin{itemize}
	\item[(a)] for each $i=1,2$, there exists a $1$-path-cycle $G^i$ with parameters $(x,c_x;z,c^i_z)$ obtained from $G$ by $\ell$ right rotations using a chord sequence $e^i_1, \dots,e^i_{\ell}$ and $V(G^i) = V(G)$;
	\item[(b)] for $i=1,2$, the chord sequence $e^i_1, \dots,e^i_{\ell}$ is spread out in $G$;
	\item[(c)] $c^1_z \ne c^{2}_z$;
	\item[(d)] $V(e^i_j) \subseteq V(G) \setminus U$ for all $i \le 2$ and all $j \le \ell$.
\end{itemize}
Moreover, the similar statements hold for left rotations.
\end{lma}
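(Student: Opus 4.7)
The plan is a BFS-style expansion of reachable endpoints via right rotations, finishing with a pigeonhole argument that forces two different colours at a common vertex.

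\emph{Maximality.} No vertex $v \in V(K_n^c) \setminus V(G)$ can be appended to the $y$-end of the path in $G$, so every edge $yv$ with $v \notin V(G)$ must carry colour $c_y$. Combined with $\Dmon(K_n^c) \le (1/2-\eps)n$, this forces $|V(G)| \ge (1/2+\eps)n$. By Corollary~\ref{rotation2}(i), the vertex set is preserved by rotations, so this bound persists throughout.

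\emph{Reachability set.} For $i \ge 0$, define
\[
R_i = \big\{(z,c_z) : \text{a $1$-path-cycle with parameters $(x,c_x;z,c_z)$ arises from $G$ by a spread-out right-chord sequence of length $i$ whose chords avoid $U$}\big\},
\]
and let $Z_i$ be the projection of $R_i$ onto first coordinates. Since $|\mathcal{C}_G(z)| \le 2$ (as $G$ is a $1$-path-cycle) and Corollary~\ref{rotation2}(i) forces $c_z \in \mathcal{C}_G(z)$, we have $|R_i| \le 2|Z_i|$. Consequently $|R_i| > |Z_i|$ yields two pairs with a common first coordinate $z$ (which lies outside $U$, since the chord sequence avoids $U$) and distinct colours.

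\emph{Expansion lower bound.} For any $(z,c_z) \in R_i$ realised by some $G_i$, the number of right chords at $z$, after discarding those incident with $U$ (at most $|U| \le \eps n/8$ vertices), those within distance $5$ of the existing pivots $\{x,y,w_1,\dots,w_i\}$ (at most $11(i+2)$ exclusions, since a radius-$5$ ball in a $1$-path-cycle has at most $11$ vertices), and the $x$-neighbourhood exclusions from Lemma~\ref{rotation}, is at least
\[
|V(G)| - 1 - \Dmon(K_n^c) - |U| - 11(i+2) - 3 \ge \eps n,
\]
provided $i \le \lceil 1/\log_2(1+\eps)\rceil + 1 = O(\eps^{-1})$. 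By Lemma~\ref{rotation}(ii), each such chord $zw$ produces a new pair $(y',c_{y'})$ with $y' \in N_G(w)$ and $c_{y'} = c(y'w')$ for $w' \in N_G(y')\setminus w$. Each candidate $y'$ has at most two preimages $w$ (its two $G$-neighbours), so at least $\eps n/2$ distinct pairs enter $R_{i+1}$ from this one start.

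\emph{Iteration and conclusion.} One shows inductively that so long as no collision has occurred ($|R_i| = |Z_i|$), the bound $|R_{i+1}| \ge (1+\eps)|R_i|$ holds. The relation $(1+\eps)^{\ell} \ge 2$ at $\ell = \lceil 1/\log_2(1+\eps)\rceil$, combined with the initial boost $|R_1| \ge \eps n / 2$, drives $|R_\ell|$ past the threshold imposed by $|Z_\ell| \le |V(G)|$, so a collision $|R_{\ell'}| > |Z_{\ell'}|$ must occur at some step $\ell' \le \lceil 1/\log_2(1+\eps)\rceil + 1$; selecting $\ell = \ell'$ supplies the required $z \in V(G) \setminus U$ with two distinct colours $c^1_z, c^2_z$ and two length-$\ell$ spread-out chord sequences. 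The left-rotation analogue follows by a fully symmetric argument, interchanging the roles of $x$ and $y$.

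The main obstacle is verifying the multiplicative growth $|R_{i+1}| \ge (1+\eps)|R_i|$ in the absence of a collision. One must bound how many different starts $(z,c_z) \in R_i$ can funnel their new pairs onto the same target in $R_{i+1}$: the colour at each new endpoint is rigidly prescribed by Lemma~\ref{rotation}(ii) (namely $c(y'w')$ for a specific $G$-neighbour $w'$ of $y'$), and the spread-out condition precludes local overlap of the rotational fibres, together yielding the required in-degree bound on the rotation map.
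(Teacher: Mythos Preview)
Your framework matches the paper's: define the set $R_\ell$ of reachable pairs $(z,c_z)$, argue multiplicative growth $|R_{\ell+1}| \ge (1+\eps)|R_\ell|$ assuming no collision, and derive a contradiction from $|R_\ell| \le n$. But the step you yourself flag as the ``main obstacle'' is not resolved by what you wrote, and it is the heart of the proof.

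Your proposed in-degree bound says that the target $(y',c_{y'})$ determines its pivot $w$ uniquely via Lemma~\ref{rotation}(ii). Correct---but this only shows that distinct pivots yield distinct targets; it does not bound how many starts $(z,c_z)\in R_\ell$ send a chord to a given pivot. Many $z$'s can all have valid right chords $zw$ to the same $w$, and the spread-out condition constrains the choice of $w$ relative to \emph{prior} pivots in the same sequence, not the multiplicity of $z$'s from different sequences. So nothing in your sketch converts the per-start chord count into growth of $|R_{\ell+1}|$.

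The paper closes this gap by a second, decisive use of the $\Dmon$ hypothesis. It forms a bipartite graph $H_\ell$ between $R_\ell$ and the pivot set $V'$, colours each edge $\mathbf{z}w$ by $c(zw)$, and splits pivots into $X_\ell$ (seeing one colour) and $Y_\ell$ (seeing at least two). A pivot $w\in X_\ell$ has $d_{H_\ell}(w)\le \Dmon(K_n^c) \le n/2$ and contributes one target; a pivot in $Y_\ell$ may receive up to $n$ edges but contributes \emph{two} targets (both $G$-neighbours, via Lemma~\ref{rotation}(iii)). This yields $e(H_\ell)\le |R_{\ell+1}|\,n/2$, and combined with the per-start lower bound $e(H_\ell)\ge (1+\eps)|R_\ell|\,n/2$ gives the growth. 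Note also that the last inequality needs roughly $(1+\eps)n/2$ chords per start, not merely $\eps n$: maximality forces every $v\notin V(G)$ to satisfy $c(vz)=c_z$, so \emph{all} $n-1-\Dmon$ vertices with $c(vz)\ne c_z$ already lie in $V(G)$; your expression $|V(G)|-1-\Dmon$ double-subtracts and is too weak to reach a contradiction within $\lceil 1/\log_2(1+\eps)\rceil+1$ steps.
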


Note that the two chord sequences $e^1_1, \dots,e^1_{\ell}$ and $e^2_1, \dots,e^2_{\ell}$ obtained from the lemma above are not necessarily vertex-disjoint from each other.
The key ingredient of the proof is the set $Z_{\ell}$ of pairs $(z,c_z)$ for $z \in V(G) \setminus U$ and colours $c_z$ such that 
\begin{itemize}
	\item[\rm (i)] there exists a $1$-path-cycle $G^{(z,c_z)}_{\ell}$ obtained from $G$ by $\ell$ right rotations using a chord sequence $e_1, \dots,e_{\ell}$;
	\item[\rm (ii)] $G^{(z,c_z)}_{\ell}$ has parameter $(x,c_x;z,c_{z})$;
	\item[\rm (iii)] the chord sequence $e_1, \dots,e_{\ell}$ is spread out in $G$;
	\item[\rm (iv)] $V(e_j) \subseteq V(G) \setminus U$ for all $j \le \ell$.
\end{itemize}
If there exist $(z, c_z), (z, c'_z) \in Z_{\ell}$ with $c_z \ne c'_z$ for some $z \in V(G) \setminus U$ and some~$\ell$, then the lemma holds.
Otherwise, we may assume that each $z \in V(G) \setminus U$ `appears' at most once in each $Z_{\ell}$.
We then show that $|Z_{\ell}| \ge (1+ \eps)^{\ell}n/2$ for $\ell \ge 1$.
Since $|Z_{\ell+1}|$ is bounded above by~$n$, we obtain a contradiction provided $\ell$ is large enough.

\begin{proof}[Proof of Lemma~$\ref{lma:rotation3}$]
Let $n_0 = \left\lceil 11\eps^{-1} \left( \lceil 1 / \log_2(1+ \eps) \rceil +3 \right) \right\rceil$.
Let $K_n^c$, $G$ and $U$ be as defined in the lemma.
For integers $\ell \ge 0$, define $Z_{\ell}$ as above. 
Thus, $Z_0 = \{ (y, c_y )\}$.
To prove the lemma, it is enough to show that there exist $z \in V(G) \setminus U$ and an integer $\ell \le  \lceil 1 / \log_2(1+ \eps) \rceil+1$ such that $(z, c_z), (z, c'_z) \in Z_{\ell}$ with $c_z \ne c'_z$.

Suppose the lemma is false.
Hence, for each integer $1 \le \ell \le  \lceil 1 / \log_2(1+ \eps) \rceil+1$, if $(z,c_z) \in Z_\ell$, then $c_z$ is uniquely determined by~$z$ and~$\ell$ (or else we are done).
We simply write $\mathbf{z} \in Z_\ell$ for $(z,c_z) \in Z_\ell$.
Note that
\begin{align}
|Z_\ell| \le n \text{ for all }0 \le \ell \le \lceil 1 / \log_2(1+ \eps) \rceil+1. \label{|Z_i|}
\end{align}
For each $\mathbf{z} \in Z_\ell$, we fix a $1$-path-cycle $G_{\ell}^{\mathbf{z}}$ and a chord sequence $e_1^{\mathbf{z}}, \dots, e_\ell^{\mathbf{z}}$ such that
\begin{itemize}
	\item[\rm (i$'$)] $G_{\ell}^{\mathbf{z}}$ is obtained from $G$ by $\ell$ right rotations using the chord sequence $e_1^{\mathbf{z}}, \dots, e_\ell^{\mathbf{z}}$;
	\item[\rm (ii$'$)] $G^{\mathbf{z}}_{\ell}$ has parameter $(x,c_x;z,c_{z})$;
	\item[\rm (iii$'$)] the chord sequence $e_1^{\mathbf{z}}, \dots, e_\ell^{\mathbf{z}}$ is spread out in $G$;
	\item[\rm (iv$'$)] $V(e_j^{\mathbf{z}}) \subseteq V(G) \setminus U$ for all $j \le \ell$.
\end{itemize}
We denote by $P_{\ell}^{\mathbf{z}}$ the path in $G_{\ell}^{\mathbf{z}}$.
Recall that $V(G^{\mathbf{z}}_\ell) = V(G)$ by Corollary~\ref{rotation2}(i).
For every $v \in V(K_n^c) \setminus V(G)$, we have $c(vz) = c_z$.
Otherwise, we can extend $P_{\ell}^{\mathbf{z}}$ enlarging the $1$-path-cycle~$G_{\ell}^{\mathbf{z}}$, which contradicts the maximality of~$|G|$.
Since $\Dmon (K_n^c)  \le (1/2 - \eps)n$, for each $\mathbf{z} =(z, c_z) \in Z_\ell$, we have
\begin{align}
|\{ v  \in V(G) \setminus z : c(vz) \ne c_z \}| & = |\{ v  \in V(K^c_n) \setminus z : c(vz) \ne c_z \}| \nonumber \\
& \ge n-1 - \Dmon \ge \left( 1/2+\eps \right) n -1.
 \label{dmon}
\end{align}

Set $U' = U \cup \bigcup_{u \in U } N_G(u)$ and $V' = V(G) \setminus U'$.
So  $|U'| \le 3|U| \le 3\eps n/8$.
For each integer $0 \le \ell \le  \lceil 1 / \log_2(1+ \eps) \rceil$, define an auxiliary bipartite graph~$H_\ell$ with vertex classes $Z_\ell$ and $V'$, and the edge set $E(H_\ell)$ satisfies for all $\mathbf{z} =(z,c_z)\in Z_\ell$ and all $v \in V'$, $\mathbf{z} v$ is an edge in $H_\ell$ if and only if $zv$ is a (right) chord for $G^{\mathbf{z}}_{\ell}$ and the chord sequence $e^{\mathbf{z}}_1, \dots,e^{\mathbf{z}}_{\ell}, zv $ is spread out in~$G$.
Given $0 \le \ell \le  \lceil 1 / \log_2(1+ \eps) \rceil$ and $\mathbf{z} = (z,c_z) \in Z_\ell$, note that the number of vertices $v$ such that $e^{\mathbf{z}}_1, \dots,e^{\mathbf{z}}_{\ell}, zv$ is not spread out is at most $11 ( \ell + 2)$.
Recall that $zv$ is a chord for $G^{\mathbf{z}}_{\ell}$ if $z \ne v \ne x$ and $c(vz) \ne c_z$.
By the definition of $H_\ell$ and \eqref{dmon}, for each $\mathbf{z} = (z,c_z) \in Z_\ell$,
\begin{align}
 d_{H_\ell}(\mathbf{z} ) & \ge |\{ v  \in V(G) : c(vz) \ne c_z \}| -|U'| - 11(\ell+2) \nonumber \\
  & \ge \left( \frac12+\eps \right) n -1 - \frac{3 \eps n}{8}- 11\left( \left\lceil \frac{1}{\log_2(1+ \eps)} \right\rceil+2 \right) \nonumber \\
& \ge (1 + \eps) n/2 \label{degz}
\end{align}
as $n$ is large.
Hence,
\begin{align}
 e(H_\ell) \ge \sum_{\mathbf{z} \in Z_\ell} d_{H_\ell}(\mathbf{z})\ge (1 + \eps ) |Z_\ell| n/2. \label{e(Hi)lower}
\end{align}

Next we investigate how $E(H_\ell)$ and $Z_{\ell+1}$ are related.
Suppose that $\mathbf{z} v$ is an edge in $H_\ell$ with $\mathbf{z} \in Z_\ell$ and $v \in V'$.
Since $zv$ is a right chord for $G^{\mathbf{z}}_\ell$, by Lemma~\ref{rotation} we know that there exists a $1$-path-cycle $G'$ with parameters $(x,c_x; v', c_{v'})$ obtained from $G^{\mathbf{z}}_\ell$ by a rotation using $zv$.
This means that $G'$ can be obtained from $G$ by rotations using $e^{\mathbf{z}}_1, \dots,e^{\mathbf{z}}_{\ell}, zv$.
Since $\mathbf{z} v$ is an edge in $H_\ell$, the chord sequence $e^{\mathbf{z}}_1, \dots,e^{\mathbf{z}}_{\ell}, zv$ is spread out in $G$. 
Recall that $v \notin U' = U \cup \bigcup_{u \in U} N_G(u)$.
Corollary~\ref{rotation2}(i) implies that $v' \in N_G(v)$ and so $v' \notin U$.
Therefore, $( v', c_{v'}) \in Z_{\ell+1}$.
So this gives a natural map $\phi$ from $e(H_{\ell})$ to~$Z_{\ell+1}$, namely $\phi( \mathbf{z} v ) = (v',c_{v'})$.
Note that $N_G(v') = \{ v , v''\}$ and $c_{v'} = c(v'v'')$ by Lemma~\ref{rotation}(ii).
Recall that if $(z',c_{z'}) \in Z_{\ell+1}$, then $c_{z'}$ are be uniquely determined by $c_{z'}$ and~$\ell+1$.
Therefore, 
\begin{align}
\text{if $\mathbf{z}v, \mathbf{z'}v' \in e(H_\ell)$ with $v \ne v'$, then $\phi(\mathbf{z}v) \ne \phi(\mathbf{z'}v')$.} \label{phi}
\end{align}
So $|Z_{\ell+1}| \ge | \bigcup_{\mathbf{z} \in  Z_\ell} N_{H_\ell} (\mathbf{z}) |$.
Since $Z_0 = \{(y,c_y)\} $, by~\eqref{degz} we have 
\begin{align}
|Z_1| \ge d_{H_0}( (y,c_y) )  \ge (1+ \eps) n/2. \label{Z1}
\end{align}

Now suppose that $1 \le \ell \le \lceil 1 / \log_2(1+ \eps) \rceil$.
We edge-colour $H_\ell$ such that the edge $\mathbf{z}v$ in $H_\ell$ has the colour~$c(zv)$ (appeared in~$K_n^c$).
Let $X_\ell$ be the set of vertices in $V'$ that see exactly one colour in $H_\ell$.
Let $Y_\ell$ be the set of vertices in $V'$ that see at least $2$ colours in $H_\ell$.
Given $v \in Y_\ell$, there exist $\mathbf{z}_1, \mathbf{z}_2 \in Z_\ell$ such that $\mathbf{z}_1v, \mathbf{z}_2v \in E(H_\ell)$ and $c(z_1v) \ne c(z_2v)$.
Let $N_G(v) = \{v_1,v_2\}$.
Without loss of generality, we may assume that $c( z_1 v ) \ne c( v v_2 )$ and $c( z_2 v ) \ne c( v v_1 )$.
By Lemma~\ref{rotation}(ii) and~(iii), there exists a 1-path cycle $G'$ with parameters $(x,c_x; v_1, c(vv_2) )$ obtained from $G^{\mathbf{z}_1}_\ell$ by rotations using $z_1v$.
Hence, $(v_1, c( v v_2)) \in Z_{\ell+1}$ and similarly $(v_2, c( v v_1 )) \in Z_{\ell+1}$.
In summary, every $y \in Y_\ell$ contributes to two distinct members of $Z_{\ell+1}$ and every $x \in X_\ell$ contributes to at least one member of $Z_{\ell+1}$.
Moreover, by a similar argument used to prove~\eqref{phi}, we deduce that all members of $Z_{\ell+1}$ derived this way are distinct.
This means that for $1 \le \ell \le  \lceil 1 / \log_2(1+ \eps) \rceil$, we have
\begin{align}
		|Z_{\ell+1}| \ge |X_\ell| + 2 |Y_\ell|. \label{zi+1}
\end{align}
Since each vertex $x \in X_\ell$ meets edges of the same colour in~$H_\ell$, $d_{H_\ell}(x) \le \Dmon (K_n^c) \le n/2$.
By counting the degrees of $w \in X_\ell  \cup Y_\ell \subseteq V'$ in $H_\ell$, we have
\begin{align}
	e(H_\ell) = \sum_{w \in X_\ell \cup Y_\ell} d_{H_\ell} (w) \le  |X_\ell|n/2 +  |Y_\ell|n \le |Z_{\ell+1}| n/2, \nonumber
\end{align}
where the last inequality is due to~\eqref{zi+1}.
Together with~\eqref{e(Hi)lower}, we have
\begin{align}
		|Z_{\ell+1}| & \ge 2e(H_\ell)/n \ge  (1+ \eps) |Z_\ell| \nonumber
\end{align}
for all $1 \le \ell\le \lceil \log_2(1+ \eps) \rceil $.
Therefore, 
\begin{align}
	|Z_{\ell+1}| & \ge (1+\eps)^\ell |Z_1| \ge (1 + \eps)^{\ell+1} n/2 \nonumber
\end{align}
for all $0 \le \ell \le \lceil 1 / \log_2(1+ \eps) \rceil$, where the last inequality is due to~\eqref{Z1}.
This implies that $|Z_\ell| > n$ when $\ell = \lceil 1 / \log_2(1+ \eps) \rceil+1$, contradicting~\eqref{|Z_i|}.
\end{proof}

We are ready to prove Lemma~\ref{lma:2factor} using Lemmas~\ref{lma:rotation4}  and~\ref{lma:rotation3}.

\begin{proof}[Proof of Lemma~$\ref{lma:2factor}$]

Let $n_0$ be the integer given by Lemma~\ref{lma:rotation3} and let 
\begin{align*}
n_1 =   \max \left\{ \left\lceil \frac{1000}{\eps\log_2(1+ \eps)}  \right\rceil, n_0 \right\} .
\end{align*}
Let $K_n^c$ be an edge-coloured $K_n$ with $\Dmon(K_n^c) \le (1/2 - \eps)n $ and $n \ge n_1$.
Let $G$ be a properly coloured $1$-path-cycle in $K_n^c$ with $|G|$ maximal.
We may assume that $G$ is not a $2$-factor or else we are done.
By applying Theorem~\ref{Feng} to $K_n^c[V(G)]$, we may assume that $G$ is a properly coloured path. 
Hence, $G$ is a $1$-path-cycle with parameter $(x,c_x;y,c_y)$ with $x \ne y$.
Apply Lemma~\ref{lma:rotation3} (with $U = \emptyset$) to $G$ and obtain an integer $\ell \le \lceil 1 / \log_2(1+ \eps) \rceil +1$, a vertex $ z \in V(G)$ and two chord sequences $e^1_1, \dots,e^1_{\ell}$ and  $e^2_1, \dots,e^2_{\ell}$ such that 
\begin{itemize}
	\item[(a)] for $i =1,2$, there exists a $1$-path-cycle $G^i_R$ with parameters $(x,c_x;z,c^i_z)$ obtained from $G$ by $\ell$ right rotations using $e^i_1, \dots,e^i_{\ell}$ such that $V(G^i_R) = V(G)$;
	\item[(b)] for $i =1,2$, the chord sequence $e^i_1, \dots,e^i_{\ell}$ is spread out in $G$;
	\item[(c)] $c^1_z \ne c^{2}_z$.
\end{itemize}
Let $U'$ be the set of vertices $u \in V(G)$ such that $\dist_G(u,v) \le 5$ for some $v \in \{x,y\} \cup \bigcup_{i,j} V(e^i_j)$.
Let $U = U' \setminus \{x,y\}$.
Hence $|U| \le 11(2+ 4\ell ) \le \eps n /8$ since $n$ is large.
By the left rotation version of Lemma~\ref{lma:rotation3}, there exist an integer $\ell' \le \lceil 1 / \log_2(1+ \eps) \rceil+1$, a vertex $ w \in V(G) \setminus U$ and two chord sequences $f^1_1, \dots,f^1_{\ell'}$ and $f^2_1, \dots,f^2_{\ell'}$ such that for $i =1,2$
\begin{itemize}
	\item[(a$'$)] for $i =1,2$, there exists a $1$-path-cycle $G^i_L$ with parameters $(w,c^{i}_w;y,c_y)$ obtained from $G$ by $\ell'$ left rotations using $f^i_1, \dots,f^i_{\ell'}$ such that $V(G^i_L) = V(G)$;
	\item[(b$'$)] for $i =1,2$, the chord sequence $f^i_1, \dots,f^i_{\ell'}$ is spread out in $G$;
	\item[(c$'$)] $c^1_w \ne c^{2}_w$;
	\item[(d$'$)] $V(f^i_j) \subseteq V(G) \setminus U$ for all $i \le 2$ and all $j \le \ell'$.
\end{itemize}
By (c) and (c$'$), we may assume without loss of generality that $c_z^1 \ne c(zw) \ne c_w^1$.
Note that the sequence $e^1_1, \dots,e^1_{\ell},f^1_1, \dots,f^1_{\ell'}$ is spread out in $G$ by (b), (b$'$), (d$'$) and the definition of~$U$.
Apply Lemma~\ref{lma:rotation4} (with $G_L = G^1_L$ and $G_R = G^1_R$) and obtain a $1$-path-cycle~$G_0$ with parameters $(w,c^{1}_w;z,c^1_z)$.
Since $c_z^1 \ne c(zw) \ne c_w^1$, $G_0 + zw$ is a union of vertex-disjoint properly coloured cycles with $V(G_0+ zw) = V(G_0) = V(G)$. 
If $V(K_n^c) \ne V(G_0)$, then $G_0 + zw$ together with a vertex $v \in V(K_n^c) \setminus V(G)$ is also properly coloured $1$-path-cycle, contradicting the maximality of $|G|$.
Therefore, $V(G_0) = V(K_n^c)$ implying that $G_0 + zw$ is a properly coloured $2$-factor in $K_n^c$ as required.
\end{proof}


\section{Proof of Theorem~\ref{PCHC}} \label{sec:proof}
We may assume that $0 < \eps <1/4$.
Let $n_0$ be the integer given by Lemma~\ref{lma:abscycle}.
Set $\gamma = 2^{-5} \eps^{4 \eps^{-2} +2}$ and $\eps' = (2\eps - \gamma)/(2-2\gamma)$.
Note that $0 <\eps' < 1/2$.
Let $N_0 = \max \{ n_0,  \lceil n_1 (\eps')/(1- \gamma) \rceil  \}$, where $n_1$ is the function given by Lemma~\ref{lma:2factor}.

Let $K_n^c$ be an edge-coloured complete graph on $n$ vertices with $\Dmon(K_n^c) \le (1/2 - \eps) n$ and $n \ge N_0$.
Let $C$ be the properly coloured cycle given by Lemma~\ref{lma:abscycle} and so $|C| \le \gamma n $.
Let $K_{n'}^c = K_n^c \setminus V(C)$.
Note that 
\begin{align}
	\Dmon( K_{n'}^c) \le (1/2 - \eps) n \le (1/2 - \eps') n', \nonumber
\end{align}
where $n' = n - |C| \ge ( 1- \gamma) n \ge n_1 (\eps')$.
There exists a properly coloured $2$-factor in $K_{n'}^c$ by Lemma~\ref{lma:2factor}.
Hence, $K_{n'}^c$ contains a properly coloured Hamiltonian path~$P$ by Theorem~\ref{Bang}.
By the property of $C$ guaranteed by Lemma~\ref{lma:abscycle}, there exists a properly coloured cycle $C'$ spanning the vertex set $V(C) \cup V(P) = V(K_n^c)$.
Hence, $C'$ is a properly coloured Hamiltonian cycle in $K_n^c$.
$\hfill{\square}$


\section{Acknowledgements}

The author would like to thank Timothy Townsend for his comments on a draft of this paper.


\end{document}